\allowdisplaybreaks \numberwithin{equation}{section}
\numberwithin{equation}{section}
\newtheorem{theorem}{Theorem}[section]
\newtheorem{proposition}[theorem]{Proposition}
\newtheorem{corollary}[theorem]{Corollary}
\newtheorem{lemma}[theorem]{Lemma}
\newtheorem*{theoremA}{Theorem A}
\theoremstyle{definition}
\newtheorem{definition}[theorem]{Definition}
\theoremstyle{remark}
\newtheorem{remark}[theorem]{Remark}
\begin{document}

\title
{Rotating vortex patches for the planar Euler equations in a disk}

 \author{Daomin Cao, Jie Wan, Guodong Wang, Weicheng Zhan}

\address{Institute of Applied Mathematics, Chinese Academy of Science, Beijing 100190, and University of Chinese Academy of Sciences, Beijing 100049,  P.R. China}
\email{dmcao@amt.ac.cn}
\address{Institute of Applied Mathematics, Chinese Academy of Science, Beijing 100190, and University of Chinese Academy of Sciences, Beijing 100049,  P.R. China}
\email{wanjie15@mails.ucas.edu.cn}
\address{Institute for Advanced Study in Mathematics, Harbin Institute of Technology, Harbin {\rm150001}, P. R. China}
\email{wangguodong14@mails.ucas.ac.cn}
\address{Institute of Applied Mathematics, Chinese Academy of Science, Beijing 100190, and University of Chinese Academy of Sciences, Beijing 100049,  P.R. China}
\email{zhanweicheng16@mails.ucas.ac.cn}


\begin{abstract}
We construct a family of rotating vortex patches with fixed angular velocity for the two-dimensional Euler equations in a disk. As the vorticity strength goes to infinity, the limit of these rotating vortex patches is a rotating point vortex whose motion is described by the Kirchhoff-Routh equation. The construction is performed by solving a variational problem for the vorticity which is based on an adaption of Arnold's variational principle. We also prove nonlinear orbital stability of the set of maximizers in the variational problem under $L^p$ perturbation when $p\in[{3}/{2},+\infty)$.
\end{abstract}

\maketitle

\section{Introduction}
The motion of an ideal fluid of unit density in the plane is governed by the well-known Euler equations
\begin{equation}\label{1-1}
\begin{cases}
\partial_t\mathbf{v}(x,t)+(\mathbf{v}\cdot\nabla)\mathbf{v}(x,t)=-\nabla P(x,t),\,\,x=(x_1,x_2)\in \mathbb R^2, t>0,\\
 \nabla\cdot\mathbf{v}(x,t)=0,
\end{cases}
\end{equation}
where $\mathbf{v}=(v_1,v_2)$ is the velocity field and $P$ is the scalar pressure.
By introducing the scalar vorticity $\omega=curl\mathbf{v}:=\partial_1v_2-\partial_2v_1$ and applying the Biot-Savart law, we get the following vorticity form of \eqref{1-1}(see \cite{MB} or \cite{MP4})
\begin{equation}\label{1-2}
\begin{cases}
\partial_t\omega+\mathbf{v}\cdot\nabla\omega=0,\\
\mathbf{v}(x,t)=\omega*\frac{1}{2\pi}\frac{-x^\perp}{|x|^2}:=\int_{\mathbb R^2}-\frac{1}{2\pi}\frac{(x-y)^\perp}{|x-y|^2}\omega(y,t)dy,
\end{cases}
\end{equation}
where $x^\perp:=(x_2,-x_1)$ denotes clockwise rotation through ${\pi}/{2}$. The vorticity equation \eqref{1-2} means that the vorticity $\omega$ is transported by $\mathbf{v}$, a velocity field determined by $\omega$ itself via the Biot-Savart law.

The famous result of Yudovich asserts that for any initial vorticity $\omega_0\in L^1\cap L^\infty(\mathbb R^2)$, there is a unique weak solution $\omega\in L^\infty((0,+\infty);L^1\cap L^\infty(\mathbb R^2))$ to \eqref{1-2}. An important type of weak solutions appropriate for modeling an isolated region of vorticity with discontinuity is the vortex patch solution, that is, the initial vorticity has the form
\begin{equation}
\omega_0(x)=\lambda I_{A_0}:=\begin{cases}\lambda,&x\in A_0,\\
0,&x\notin A_0,\end{cases}
\end{equation}
where $\lambda\in \mathbb R$ is a parameter representing the vorticity strength.
Since the vorticity is transported by the divergence-free velocity $\mathbf{v}$, we conclude that the evolved solution $\omega(x,t)$ still has the form $\omega(x,t)=\lambda I_{A_t}$ with $|A_t|=|A_0|$, where $|\cdot|$ denotes the two-dimensional Lebesgue measure. A very special example is when $A_0$ is a disk. In this case, it is easy to check that $A_t=A_0$ for all $t>0$. Another remarkable example discovered by Kirchhoff is that $A_0$ is an ellipse centered at the origin with semi-axis $a$ and $b$. In this situation, it can be proved that
$A_t$ is given by
  \begin{equation}
 A_t=e^{i\Omega t}A_0:=\{e^{i\Omega t}x\mid x\in A_0\},
\end{equation}
where
\begin{equation}
e^{i\Omega t}x:=(|x|\cos(\theta_x+\Omega t),|x|\sin(\theta_x+\Omega t))\,\,\text{for each }x=(|x|\cos\theta_x,|x|\sin\theta_x),
\end{equation}
and $\Omega\in\mathbb R$ is the angular velocity determined by $\Omega=(\lambda ab)/(a+b)^2$. See \cite{MB}, Chapter 8.

An interesting question is that is there any other type of rotating vortex patches in the plane?  There are many works in this respect. Here we list some of the relevant and significant ones. In 1978, Deem and Zabusky \cite{DZ} firstly discovered that there exist simply connected rotating vortex patches with a $m$-fold symmetry for $m\geq 2$ by numerical methods. Later in \cite{Z} Zabusky conjectured that: for any steady (or rotating) system of point vortices there exists a family of steady (or rotating) vortex patches shrinking to these point vortices as the vorticity strength goes to infinity.
Burbea in \cite{Burb} partially answered Zabusky's question by using bifurcation theory. In 1988, Wan \cite{W} studied Zabusky's conjecture and proved that for any rotating system of point vortices with some non-degenerate conditions, the conjecture is ture. Moreover, he also analyzed the linear stability of these rotating vortex patches. Recently in \cite{DE} the authors proved existence of doubly connected rotating vortex patches with a $m$-fold symmetry for some $m\geq 3.$ For active scaler equations, existence of corotating and counter-rotating vortex patches is proved in \cite{HM}.

All of the results mentioned above are about the whole plane. As to the disk, we recall the result in \cite{dHHM}, where the authors studied existence of rotating vortex patches with $m$-fold symmetry bifurcating from a circular patch or an annulus patch. We also point out that in \cite{Hm} the author studied the radial symmetry property of rotating patches in the disk.
Our aim in this paper is to construct another type of rotating vortex patches in the disk. For simplicity we only consider the unit disk centered at the origin, denoted by $D=\{x\in\mathbb R^2\mid |x|<1\}$. The Euler equations in $D$ with impermeability boundary condition is
\begin{equation}\label{1}
\begin{cases}
  \partial_t\mathbf{v}(x,t)+(\mathbf{v}\cdot\nabla)\mathbf{v}(x,t)=-\nabla P(x,t) &\text{in $D\times(0,+\infty)$},\\
  \nabla\cdot\mathbf{v}(x,t)=0 &\text{in $D\times(0,+\infty)$},\\
 \mathbf{v}(x,0)=\mathbf{v}_0(x)&\text{in $D$},
 \\ \mathbf{v}(x,t)\cdot \vec{n}(x)=0 &\text{on $\partial D\times(0,+\infty)$},
\end{cases}
\end{equation}
where $\vec{n}(x)$ is the outward unit normal at $x\in\partial D$.
In this situation, we still have the following vorticity equation
\begin{equation}\label{3}
\begin{cases}
\partial_t\omega(x,t)+\nabla\cdot(\mathbf{v}\omega)(x,t)=0 &\text{in $D\times(0,+\infty)$},\\
\omega(\cdot,0)=\omega_0:=curl\mathbf{v}_0 &\text{in } D.
\end{cases}
\end{equation}
Since $\mathbf{v}$ is divergence-free and $\mathbf{v}\cdot \vec{n}=0$ on $\partial D$, $\mathbf{v}$ can be expressed in terms of $\omega$
\begin{equation}\label{23}
\mathbf{v}=\nabla^\perp G\omega=(\partial_2G\omega,-\partial_1G\omega),
\end{equation}
 where $G\omega(x,t)=\int_DG(x,y)\omega(y,t)dy$,  and $G$ is the Green's function for $-\Delta$ in $D$ with zero
Dirichlet boundary condition in $D$, that is,
\begin{equation}
G(x,y)=-\frac{1}{2\pi}\ln |x-y|-h(x,y), \,\,\,x,y\in
D,
\end{equation}
where $h(x,y)=-\frac{1}{2\pi}\ln|y|-\frac{1}{2\pi}\ln\big|{x}-\frac{{y}}{|{y}|^2}\big|$ is the regular part of $G(x,y)$.

From \eqref{3} and \eqref{23}, by integration by parts we give the definition of weak solutions to the vorticity equation \eqref{3}.
\begin{definition}
Suppose $p\in[4/3,+\infty]$. We call $\omega(x,t)\in L^\infty((0,+\infty);L^p(D))$ a weak solution to \eqref{3} if
 \begin{equation}\label{997}
  \int_D\omega_0(x)\xi(x,0)dx+\int_0^{+\infty}\int_D\omega(\partial_t\xi+\nabla\xi\cdot \nabla^\perp G\omega)dxdt=0
  \end{equation}
for all $\xi\in C_c^{\infty}(D\times[0,+\infty))$.
\end{definition}
Note that for $\omega\in L^\infty((0,+\infty); L^{4/3}(D))$, we have $G\omega\in L^\infty((0,+\infty); W^{2,{4/3}}(D))$ by $L^p$ estimate, thus $\nabla G\omega\in L^\infty((0,+\infty); L^4(D))$ by Sobolev embedding. So the integral in \eqref{997} makes sense by H\"older's inequality.

The existence and uniqueness result for the vorticity equation when $p=+\infty$ is firstly proved by Yudovich \cite{Y}. For general $p>4/3$, by using an approximation procedure and the DiPerna-Lions theory of linear transport equations \cite{DL}, Burton \cite{B5} proved the following theorem.

\begin{theoremA}\label{A}
Suppose $4/3<p<+\infty$ and $\omega_0\in L^p(D)$. Then there exists a weak solution $\omega(x,t)\in L^\infty((0,+\infty);L^p(D))$ to the vorticity equation \eqref{3}. Moreover,
\begin{itemize}
\item[(i)] all $L^\infty( (0,+\infty); L^p(D))$ solutions belong to $C( [0,+\infty); L^p(D))$;
\item[(ii)] for any weak solution $\omega(x,t)\in L^{\infty}((0,+\infty);L^p(D))$,  we have $\omega(x,t)\in R_{\omega_0}$ for all $t\geq 0$, where $R_{\omega_0}$ denotes the rearrangement class of $\omega_0$,
 \begin{equation}
 R_{\omega_0}:=\{ v \in L^1_{loc}(D)\mid  |\{v>a\}|=|\{\omega_0>a\}| ,\forall a\in \mathbb R\};
 \end{equation}
\item[(iii)] for any $L^\infty( (0,+\infty); L^p(D))$ solutions, the angular momentum is conserved, or equivalently,
\[J(t)=J(0),\,\,\forall t\in[0,+\infty),\,\,\text{where }J(t):=\int_D|x|^2\omega(x,t)dx;\]
\item[(iv)] if $p\geq 3/2$, then the kinetic energy of the fluid is conserved, or equivalently,
\[E(t)=E(0),\,\,\forall t\in[0,+\infty),\,\,\text{where }E(t):=\frac{1}{2}\int_D\int_DG(x,y)\omega(x,t)\omega(y,t)dxdy;\]
\end{itemize}
\end{theoremA}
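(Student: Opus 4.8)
The plan is to establish existence by a regularization and compactness argument, and then to obtain (i)--(iv) by viewing a \emph{given} weak solution as a solution of a \emph{linear} transport equation with a fixed, divergence-free, boundary-tangent velocity field to which the DiPerna--Lions theory applies.

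\textbf{Existence.} First I would approximate $\omega_0\in L^p(D)$ by $\omega_0^n\in L^\infty(D)$ with $\omega_0^n\to\omega_0$ in $L^p(D)$, and invoke Yudovich's theorem \cite{Y} to obtain unique solutions $\omega^n\in L^\infty((0,+\infty);L^\infty(D))$. Since each $\omega^n(\cdot,t)$ is a rearrangement of $\omega_0^n$, all its $L^r$ norms are conserved, so $\{\omega^n\}$ is bounded in $L^\infty((0,+\infty);L^p(D))$, and by the $L^p$ estimate for $G$ with Sobolev embedding the velocities $\mathbf v^n=\nabla^\perp G\omega^n$ are bounded in $L^\infty((0,+\infty);W^{1,p}(D))$ and satisfy $\mathbf v^n\cdot\vec n=0$. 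Passing to a subsequence, $\omega^n\rightharpoonup^*\omega$. The only real difficulty is the quadratic term $\mathbf v^n\omega^n$, for which I would extract \emph{strong} compactness of the velocities: from $\partial_t\omega^n=-\nabla\cdot(\mathbf v^n\omega^n)$ one bounds $\partial_t G\omega^n=G\partial_t\omega^n$ in a higher-order space uniformly in $n$, while $G\omega^n$ is bounded in $L^\infty(W^{2,p})$; an Aubin--Lions--Simon argument then gives $\mathbf v^n\to\mathbf v=\nabla^\perp G\omega$ strongly in $C_{loc}([0,+\infty);L^q(D))$ for suitable $q$. Strong convergence of $\mathbf v^n$ against weak-$*$ convergence of $\omega^n$ allows passage to the limit in \eqref{997}.

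\textbf{Properties (i)--(iii).} For a given weak solution set $\mathbf v=\nabla^\perp G\omega$; since $p>4/3$, $\mathbf v\in L^\infty((0,+\infty);W^{1,p}(D))\subset L^\infty(W^{1,1}_{loc})$ with $\nabla\cdot\mathbf v=0$ and $\mathbf v\cdot\vec n=0$, so $\omega$ solves the linear transport equation with this frozen field and the DiPerna--Lions theory \cite{DL} applies. This yields (i): every weak solution is renormalized, hence has a representative in $C([0,+\infty);L^p(D))$, weak continuity being upgraded to strong continuity since $\|\omega(\cdot,t)\|_{L^p}$ is constant and $L^p$ is uniformly convex. For (ii), the regular Lagrangian flow $\Phi_t$ is measure preserving because $\mathbf v$ is divergence free, so $\omega(\cdot,t)=\omega_0\circ\Phi_t^{-1}\in R_{\omega_0}$; equivalently the renormalization identity $\int_D\beta(\omega(x,t))\,dx=\int_D\beta(\omega_0)\,dx$ for all continuous $\beta$ preserves the distribution function. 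For (iii) I would test \eqref{997} with $\xi(x)=|x|^2$ (admissible after a boundary cutoff, the error vanishing since $\mathbf v\cdot\vec n=0$), giving $\tfrac{d}{dt}J(t)=2\int_D\omega\,\mathbf v\cdot x\,dx$. Writing $\psi=G\omega$ so that $\mathbf v\cdot x=\partial_\theta\psi$ and integrating by parts (using $\psi=0$, hence $\partial_\theta\psi=0$, on $\partial D$) reduces this to $\int_D\nabla\psi\cdot\nabla\partial_\theta\psi\,dx$, which vanishes because rotation preserves the Dirichlet energy on the disk (differentiate $\int_D|\nabla\psi(R_s\,\cdot)|^2\,dx$ at $s=0$).

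\textbf{Property (iv) and main obstacle.} Formally $\tfrac{d}{dt}E(t)=\int_D\psi\,\partial_t\omega=\int_D\omega\,\mathbf v\cdot\nabla\psi=0$, since $\mathbf v\cdot\nabla\psi=\nabla^\perp\psi\cdot\nabla\psi=0$ pointwise; the content of (iv) is to justify this for weak solutions. I would mollify, derive the energy balance for the regularized equation, and control the resulting commutator by $\|\mathbf v\|_{L^q}^2\|\omega\|_{L^p}$ with $2/q+1/p=1$. Because $\omega\in L^p$ gives $\mathbf v\in W^{1,p}\hookrightarrow L^{2p/(2-p)}$, one finds $2(2-p)/(2p)+1/p=(3-p)/p\le 1$ exactly when $p\ge 3/2$, which is precisely the threshold making the flux term integrable. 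The hard part will be twofold: securing enough strong compactness of the velocities in the existence step to pass to the limit in the nonlinearity, and, above all, the commutator estimate in (iv)---proving the energy flux commutator actually tends to $0$ (not merely stays bounded) at the sharp exponent $p=3/2$, below which weak solutions may in principle dissipate energy.
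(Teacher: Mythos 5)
The paper does not prove Theorem A at all: it is quoted from Burton \cite{B5}, and your outline --- Yudovich approximation plus velocity compactness for existence, then the DiPerna--Lions linear transport theory applied to the frozen divergence-free field $\nabla^\perp G\omega$ for (i)--(iii), with the correctly identified exponent thresholds $p>4/3$ for integrability of the flux and $p\ge 3/2$ for the energy commutator --- is essentially the strategy of Burton's proof, the linear-transport part of which the paper re-invokes as Lemma \ref{dl}. So your proposal matches the (cited) proof in approach; no further comparison is needed.
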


Inspired by the study of rotating vortex patches in the whole plane, our aim in this paper is to construct a vortex patch solution $\omega(x,t)$ in $D$ satisfying
\begin{equation}
\omega(x,t)=w(e^{-i\Omega t}x),\,\,w=\lambda I_{A_0},
\end{equation}
where $\Omega$ represents the angular velocity.
By a simple calculation, it is easy to check $w$ satisfies
\begin{equation}\label{w}
\nabla\cdot\left(w\nabla^\perp(Gw+\frac{\Omega}{2}|x|^2)\right)=0.
\end{equation}
The weak form of \eqref{w} is
\begin{equation}\label{001}
\int_Dw(x)\nabla\left(Gw(x)+\frac{\Omega}{2}|x|^2\right)\cdot \nabla^\perp \phi(x) dx=0
  \end{equation}
for all $\phi\in C_c^{\infty}(D)$.

To find a vortex patch solution satisfying \eqref{001}, we use the vorticity method established by Arnold \cite{A}, which asserts that a steady flow can be seen as a constrained critical point of the kinetic energy, and the flow is stable if and only if this critical point is non-degenerate. A good reference in this respect is \cite{AK}. The vorticity method was later developed by many authors. See \cite{B2,B3,B4,Ta,T,WP}.  The method used in this paper is closely related to \cite{T}. In \cite{T}, Turkington solved a variational problem for the vorticity to obtain existence of steady vortex patches in general bounded domains. Let $D_0$ be a simply connected domain with a smooth boundary, $G_0$ be the Green's function for $-\Delta$ in $D_0$ with zero boundary condition. Consider the maximization the kinetic energy
\[{E}(\omega):=\frac{1}{2}\int_{D_0}\int_{D_0}G_0(x,y)\omega(x)\omega(x)dxdy\]
in the admissible class
\begin{equation}\label{K}
K_\lambda(D_0):=\{\omega\in L^\infty(D_0)\mid 0\leq\omega\leq\lambda\text{ a.e. in }D_0,\,\,\int_{D_0}\omega(x)dx=1\}.
\end{equation}
Turkington proved that there exists a maximizer for $E$ over $K_\lambda(D_0)$, and any maximizer $\omega^\lambda$ must be a steady vortex patch with the form $\omega^\lambda=\lambda I_{U^\lambda}$. Moreover, he showed that as $\lambda\rightarrow+\infty$, the vortex core $U^\lambda$ shrinks to a global minimum point of the Robin function of $D_0$, and the scaled version of $\partial U^\lambda$ converges to the unit circle in $C^1$ sense.

Inspired by Turkington's method, we consider the maximization of the following functional
\begin{equation}\label{E}
\mathcal{E}(w):=\frac{1}{2}\int_{D}\int_{D}G(x,y)w(x)w(x)dxdy+\frac{\Omega}{2}\int_D|x|^2w(x)dx
\end{equation}
in the admissible class $K_\lambda(D)$
\begin{equation}\label{KD}
K_\lambda(D):=\{w\in L^\infty(D)\mid 0\leq w\leq\lambda\text{ a.e. in }D,\,\,\int_{D}w(x)dx=1\}.
\end{equation}
It is easy to prove that there exists a maximizer of $\mathcal{E}$ over $K_\lambda(D)$ but with the form $w^\lambda=\lambda I_{A^\lambda}+2\Omega I_{B^\lambda}$, where
 \[A^\lambda=\{x\in D\mid Gw^\lambda(x)+\frac{\Omega}{2}|x|^2>\mu^\lambda\}\text{ and }B^\lambda=\{x\in D\mid Gw^\lambda(x)+\frac{\Omega}{2}|x|^2=\mu^\lambda\}\]
  for some $\mu^\lambda\in\mathbb R$ depending on $\lambda$. If $\lambda=2\Omega$, then obviously $w^\lambda$ is still a vortex patch solution. If $\lambda\neq2\Omega$, we expect $|B^\lambda|=0,$ but it is hard to prove this by using Turkington's technique. To circumvent this difficulty, we use the strict convexity of the functional $\mathcal{E}$ to conclude that the any maximizer $w^\lambda$ is in fact the unique maximizer of the functional
  \begin{equation}\label{qqq}
\mathcal{Q}(w):=\int_D(Gw^\lambda(x)+\frac{\Omega}{2}|x|^2)w(x)dx
  \end{equation}
in the admissible $K_\lambda(D)$. From this fact, we can easily deduce that the measure of $B^\lambda$ is zero if $\lambda\neq 2\Omega$. See Proposition \ref{patch} in Section 2. The fact that any maximizer of $\mathcal{E}$ over $K_\lambda(D)$ is a vortex patch solution will be used to prove Theorem \ref{os} below.
In addition, we also analyze the limiting behavior of $w^\lambda$ as $\lambda\rightarrow+\infty$.

The first result of this paper is as follows.
\begin{theorem}\label{Thm}
Let $\Omega,\lambda$ be two positive numbers with $\lambda>|D|^{-1}$, and $\mathcal{E},{K_\lambda(D)}$ be defined by \eqref{E} and \eqref{KD}. Then $\mathcal{E}$ attains its maximum in $K_\lambda(D)$ and any maximizer satisfies \eqref{001}. Moreover,
 any maximizer $w^\lambda$ has the following form
\begin{equation}
w^\lambda=\lambda I_{A^\lambda}+2\Omega I_{B^\lambda},
\end{equation}
where
\[A^\lambda=\{x\in D\mid Gw^\lambda(x)+\frac{\Omega}{2}|x|^2>\mu^\lambda\}\text{ and }B^\lambda=\{x\in D\mid Gw^\lambda(x)+\frac{\Omega}{2}|x|^2=\mu^\lambda\},\]
and $\mu^\lambda$ is the Lagrange multiplier depending on $\lambda$. If $\lambda\neq2\Omega$,  then $|B^\lambda|=0$. Furthermore, as $\lambda\rightarrow+\infty$, the following estimates hold true:
\begin{itemize}
\item[(i)] $diam(A^\lambda)\le R_0\varepsilon$, where $R_0>1$ does not depend on $\lambda$ and $\varepsilon=(\pi\lambda)^{-1/2}$;
\item[(ii)] up to a subsequence, $\int_Dxw^\lambda(x)dx\rightarrow X^*\in D$, where $X^*$ is a global minimum point of $H(x)-\frac{\Omega}{2}|x|^2$, where $H(x):=\frac{1}{2}h(x,x)$ is the Robin function of $D$;
 \item[(iii)]${\lambda}^{-1}w^\lambda(X^\lambda+\varepsilon y)\to I_{B_1(0)}$ weakly star in $L^\infty(B_{R_0}(0))$;
 \item[(iv)] $\pi \psi^\lambda(X^\lambda+\varepsilon y)\to V^* ~\text{in}~ C^1_{loc}(\mathbb{R}^2)$, where $\psi^\lambda:=Gw^\lambda(x)+\frac{\Omega}{2}|x|^2-\mu^\lambda$ and $V^*$ is the Rankine streamfunction defined by
     \begin{equation}\label{rank}
\begin{split}
\ V^*(y)
        :=\left \{
          \begin{array}{cccccc}
              \frac{1}{4}(1-|y|^2),                                                  &  0 \le|y|\le 1,\\
              \frac{1}{2}\ln({|y|}^{-1}),                                            &  1<|y|<\infty.
          \end{array}
          \right.
\end{split}
\end{equation}
\end{itemize}

\end{theorem}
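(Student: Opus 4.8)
The plan is to treat the variational problem by the direct method and then extract the pointwise profile of the maximizer from the first variation. First I would note that $K_\lambda(D)$ is nonempty precisely because $\lambda>|D|^{-1}$, and that it is convex and sequentially compact in the weak-$*$ topology of $L^\infty(D)$, being a bounded, weak-$*$ closed subset of the ball of radius $\lambda$. On $K_\lambda(D)$ the functional $\mathcal{E}$ is weak-$*$ continuous: the term $\frac{\Omega}{2}\int_D|x|^2w$ is continuous since $|x|^2\in L^1(D)$, while the quadratic term is continuous because $w\mapsto Gw$ is compact (the kernel $G$ has only a logarithmic singularity and lies in every $L^q(D\times D)$), so $w_n\rightharpoonup w$ gives $Gw_n\to Gw$ strongly and the energy passes to the limit. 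Hence a maximizer $w^\lambda$ exists. For any $v\in K_\lambda(D)$ the segment $w^\lambda+s(v-w^\lambda)$ stays admissible for $s\in[0,1]$, and $\frac{d}{ds}\big|_{s=0^+}\mathcal{E}=\int_D\psi_0^\lambda(v-w^\lambda)\le0$ with $\psi_0^\lambda:=Gw^\lambda+\frac{\Omega}{2}|x|^2$; thus $w^\lambda$ maximizes the \emph{linear} functional $\mathcal{Q}$ of \eqref{qqq} over $K_\lambda(D)$.

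Maximizing a linear functional over $K_\lambda(D)$ is governed by the bathtub principle: there is a level $\mu^\lambda$ with $w^\lambda=\lambda$ on $A^\lambda=\{\psi_0^\lambda>\mu^\lambda\}$, $w^\lambda=0$ on $\{\psi_0^\lambda<\mu^\lambda\}$, and $w^\lambda\in[0,\lambda]$ on $B^\lambda=\{\psi_0^\lambda=\mu^\lambda\}$. Since $\psi_0^\lambda$ is a.e.\ constant on $B^\lambda$ we have $\Delta\psi_0^\lambda=0$ a.e.\ there, and because $\Delta Gw^\lambda=-w^\lambda$ and $\Delta(\frac{\Omega}{2}|x|^2)=2\Omega$ this forces $w^\lambda=2\Omega$ a.e.\ on $B^\lambda$, giving the asserted form. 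To pin the value on $B^\lambda$ I would use strict convexity: $G$ is a positive operator, so $\mathcal{E}$ is strictly convex and $\mathcal{E}(v)>\mathcal{E}(w^\lambda)+\int_D\psi_0^\lambda(v-w^\lambda)$ whenever $v\ne w^\lambda$; if $v$ were another maximizer of $\mathcal{Q}$ the integral would vanish and the inequality would contradict maximality, so $w^\lambda$ is the \emph{unique} maximizer of $\mathcal{Q}$. If $\lambda\ne2\Omega$ then $2\Omega\notin\{0,\lambda\}$ (and if $2\Omega>\lambda$ the constraint $w^\lambda\le\lambda$ already forces $|B^\lambda|=0$), so whenever $|B^\lambda|>0$ the value $2\Omega\in(0,\lambda)$ could be redistributed within $B^\lambda$ to produce a second maximizer of $\mathcal{Q}$, a contradiction; hence $|B^\lambda|=0$. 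Finally, the three-set structure means $w^\lambda=g(\psi_0^\lambda)$ for a monotone profile $g$, so $w^\lambda\nabla^\perp\psi_0^\lambda=\nabla^\perp\Phi(\psi_0^\lambda)$ for a primitive $\Phi$ of $g$ and $\nabla\cdot(w^\lambda\nabla^\perp\psi_0^\lambda)=0$ weakly, which is exactly \eqref{001}.

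For the limit $\lambda\to\infty$ I set $\varepsilon=(\pi\lambda)^{-1/2}$, so that $|A^\lambda|=\lambda^{-1}=\pi\varepsilon^2$ is the area of a disk of radius $\varepsilon$. The analysis rests on a two-sided energy estimate. For the lower bound I insert the round test patch $\lambda I_{B_\varepsilon(X_0)}$, with $X_0$ a global minimum of $H(x)-\frac{\Omega}{2}|x|^2$, and compute, using $G=-\frac{1}{2\pi}\ln|x-y|-h$ and $\int w=1$, that $\mathcal{E}(w^\lambda)\ge\frac{1}{4\pi}\ln\frac{1}{\varepsilon}-\big(H(X_0)-\frac{\Omega}{2}|X_0|^2\big)+o(1)$. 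The matching upper bound uses the rearrangement inequality $\int_{A^\lambda}\ln\frac{1}{|x-y|}\,dy\le\int_{B_\varepsilon(0)}\ln\frac{1}{|z|}\,dz$ to bound $Gw^\lambda\le\frac{1}{2\pi}\ln\frac{1}{\varepsilon}+C$ pointwise, which in turn controls the Lagrange multiplier $\mu^\lambda=\frac{1}{2\pi}\ln\frac{1}{\varepsilon}+O(1)$. \textbf{The concentration estimate (i) is the main obstacle}: one must show that no point of $A^\lambda=\{\psi_0^\lambda>\mu^\lambda\}$ can sit far from the bulk, because a spread-out support lowers the logarithmic self-energy below the value mandated by the lower bound. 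Quantifying this drop — typically through the level-set geometry of $\psi_0^\lambda$ and the sharp constant in the logarithmic energy of a patch of fixed mass — is what yields a universal $R_0$ with $\mathrm{diam}(A^\lambda)\le R_0\varepsilon$.

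Once concentration is secured, the remaining items follow Turkington's scheme. The center $X^\lambda:=\int_D x\,w^\lambda\,dx$ lies within $O(\varepsilon)$ of $A^\lambda$, and inserting the refined expansion $\mathcal{E}(w^\lambda)=\frac{1}{4\pi}\ln\frac{1}{\varepsilon}-\big(H(X^\lambda)-\frac{\Omega}{2}|X^\lambda|^2\big)+o(1)$ into the lower bound forces $H(X^\lambda)-\frac{\Omega}{2}|X^\lambda|^2$ to converge to its minimum, giving (ii) along a subsequence. For (iii) I pass to blow-up coordinates $y=(x-X^\lambda)/\varepsilon$: the rescaled densities $\lambda^{-1}w^\lambda(X^\lambda+\varepsilon y)$ are bounded in $L^\infty$ and supported in $B_{R_0}(0)$, so they converge weak-$*$ to some $I_E$ with $|E|=\pi$, and identifying the limiting (local) variational problem — whose unique maximizer is the normalized indicator of a disk — gives $E=B_1(0)$. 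Finally, rescaling $-\Delta\psi^\lambda=w^\lambda-2\Omega$ shows that $\pi\psi^\lambda(X^\lambda+\varepsilon y)$ solves, in the limit, $-\Delta V^*=I_{B_1(0)}$ with the normalization fixing the additive constant; interior $W^{2,p}$ and Schauder estimates upgrade the convergence to $C^1_{loc}(\mathbb{R}^2)$ and identify the limit with the Rankine streamfunction \eqref{rank}, which is (iv).
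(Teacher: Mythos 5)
Your overall strategy is Turkington's scheme, which is exactly what the paper follows: direct method in the weak-$*$ topology for existence, reduction to the linear functional $\mathcal{Q}$ and the bathtub principle for the patch structure, strict convexity of $\mathcal{E}$ to make $w^\lambda$ the unique maximizer of $\mathcal{Q}$ and hence to kill $B^\lambda$ by a redistribution argument when $\lambda\neq 2\Omega$, and the standard energy expansions for the asymptotics. The one step where you genuinely depart from the paper is the derivation of \eqref{001}: you write $w^\lambda\nabla^\perp\bigl(Gw^\lambda+\frac{\Omega}{2}|x|^2\bigr)=\nabla^\perp\bigl(\lambda\,\psi^\lambda_+\bigr)$ a.e.\ (legitimate, since $\nabla\psi^\lambda=0$ a.e.\ on the level set) and integrate by parts, whereas the paper perturbs $w^\lambda$ along the area-preserving flow generated by $\nabla^\perp\phi$ and differentiates $\mathcal{E}(w_s)$ at $s=0$. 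Your route is correct and shorter; the paper's flow-map argument has the advantage of not requiring the explicit functional relation $w^\lambda=g(\psi^\lambda)$.

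The genuine gap sits exactly where you flag ``the main obstacle,'' and it is not only the diameter argument itself but the estimate feeding it. The bound (i) is obtained as follows: for $x\in\mathrm{supp}(w^\lambda)$ one has $\mu^\lambda\le Gw^\lambda(x)+\frac{\Omega}{2}|x|^2$, and substituting a \emph{lower} bound $\mu^\lambda\ge\frac{1}{2\pi}\ln\frac{1}{\varepsilon}-C$ yields $\int_D\ln\frac{\varepsilon}{|x-y|}\,w^\lambda(y)\,dy\ge-C$, hence $\int_{D\setminus B_{R\varepsilon}(x)}w^\lambda\le C/\ln R<\tfrac12$ for $R$ large, and a two-ball mass-counting argument gives $\mathrm{diam}(\mathrm{supp}(w^\lambda))\le 2R\varepsilon$. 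The mechanism you propose for controlling the multiplier --- the pointwise rearrangement bound $Gw^\lambda\le\frac{1}{2\pi}\ln\frac{1}{\varepsilon}+C$ --- delivers only the \emph{upper} bound on $\mu^\lambda$, which is useless here. The missing ingredient is the uniform bound on the excess energy of the vortex core, $T(w^\lambda)=\frac12\int_D|\nabla\psi^\lambda_+|^2\,dx\le C$ (the paper's Lemma \ref{810}, proved by testing $-\Delta\psi^\lambda=w^\lambda-2\Omega$ against $\psi^\lambda_+-\gamma^\lambda$ and using H\"older, Sobolev and $|\{\psi^\lambda>0\}|\le\lambda^{-1}$), which together with the identity $2T(w^\lambda)=2\mathcal{E}(w^\lambda)-\mu^\lambda+O(1)$ and the energy expansion of Lemma \ref{808} gives the two-sided estimate $\mu^\lambda=\frac{1}{2\pi}\ln\frac{1}{\varepsilon}+O(1)$. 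Without this (or an equivalent control of $\int_Dw^\lambda\,(\psi^\lambda)_+\,dx$), your sketch of (i) does not close, and (i) is the foundation on which (ii)--(iv) rest.
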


\begin{remark}\label{minim}
For $D$, the unit disk centered at the origin, the Robin function has an explicit expression
\[H(x)=-\frac{1}{4\pi}\ln(1-|x|^2).\]
It is easy to check that when $0<\Omega\leq1/\pi$, the unique minimum point of $H-\frac{\Omega}{2}|x|^2$ in $D$ is the origin.
When $\Omega>\frac{1}{2\pi}$, $H-\frac{\Omega}{2}|x|^2$ attains its minimum in $D\setminus\{{0}\}$ and all the minimum points are on the circle $\{x\in D\mid |x|=(1-(2\pi\Omega)^{-1})^{1/2}\}$.
\end{remark}
\begin{remark}
By (i) and (ii) in Theorem \ref{Thm}, as $\lambda\rightarrow+\infty$, the limit of $w^\lambda$ is a Dirac measure with unit strength at $X^*$~in the distributional sense. By Remark \ref{minim}, $X^*\neq {0}$ if and only if $\Omega>\frac{1}{2\pi}.$ This is consistent with the point vortex model. In fact, according to the point vortex model(see \cite{L}), the motion of a point vortex is described by the following Kirchhoff-Routh equation
\begin{equation}
 \frac{dx(t)}{dt}=-\nabla^\perp H(x(t)).
\end{equation}
It is easy to check that the angular velocity of the point vortex at $X^*$ is $(2\pi(1-|X^*|^2))^{-1}\in (\frac{1}{2\pi},+\infty).$
\end{remark}

\begin{remark}
It is easy to see that the function $u(x):=Gw^\lambda(x)+\frac{\Omega}{2}|x|^2-\frac{\Omega}{2}$ satisfies the following semilinear elliptic equation
\begin{equation}\label{str}
\begin{cases}
-\Delta u=f(u),&\text{in } D,\\
u=0,&\text{on }\partial D,
\end{cases}
\end{equation}
where $f(u)=\lambda I_{\{x\in D\mid u(x)>\mu^\lambda-{\Omega}/{2}\}}-2\Omega.$
In fact, one can construct steady Euler flows by solving \eqref{str} directly. See \cite{AS,CLW,CPY,SV} for example. It is worth mentioning that in \cite{SV} Smets and Schaftingen proved existence of a rotating Euler flow in a disk. However, the flow they constructed is smooth.

\end{remark}

Since we have constructed a solution $w^\lambda$ satisfying \eqref{001}, it is easy to verify that $\omega^\lambda(x,t):=w^\lambda(e^{-i\Omega t}x)$
is a weak solution to the vorticity equation \eqref{3}, rotating in $D$ with angular velocity $\Omega$. Moreover, for any fixed time $t>0$, the support of $\omega^\lambda(x,t)$ ``shrinks" to a  point $X(t)$ as $\lambda\rightarrow+\infty$ in the following sense:
\begin{equation*}
  \begin{split}
  &diam(supp(\omega^{\lambda}(\cdot,t)))\le R_0\varepsilon,\\
 \int_Dxw^\lambda&(x)dx\rightarrow X(t)\text{ (up to a subsequence)},
  \end{split}
\end{equation*}
where $X(t)$ is the solution to the following Kirchhoff-Routh equation
\[\frac{dX(t)}{dt}=-\nabla^\perp H(X(t)),\,\,X(0)=X^*.\]

The second result of this paper is concerned with the orbital stability of the set of maximizers of $\mathcal{E}$ in $K_\lambda$. Define
\begin{equation}\label{s}
\mathcal{S}_\lambda:=\{\omega\in K_\lambda(D)\mid \mathcal{E}(\omega)=\sup_{K_\lambda(D)}\mathcal{E}\}.
\end{equation}
According to Theorem \ref{Thm}, $\mathcal{S}_\lambda$ is not empty, moreover, any element in $\mathcal{S}_\lambda$ is a vortex patch.
By energy and angular momentum conservation in Theorem A, it is also easy to see that for any $\omega_0\in\mathcal{S}_\lambda$, we have $\omega_t\in\mathcal{S}_\lambda$ for all $t>0$, where $\omega_t$ is a weak solution to the vorticity equation with initial vorticity $\omega_0$. An interesting question is, for any given initial vorticity $\omega_0$ that is sufficiently close to $\mathcal{S}_\lambda$ in some norm, will it be close to $\mathcal{S}_\lambda$ for all $t>0$ in the same norm? If it is true, $\mathcal{S}_\lambda$ is said to be orbitally stable.

There are many results concerning the stability of planar vortex flows in the past few decades. See \cite{B5,B6,CW0,CW,Ta,W,WP} and the references listed therein.
The type of stability we consider here is nonlinear stability, which is usually a very difficult problem in hydrodynamics. A very effective method to prove nonlinear stability for smooth planar Euler flows is established by Arnold \cite{A2}, which was later extended to non-smooth flows, for example, vortex patches. See \cite{CW0,CW,Ta,WP}. In \cite{B5}, Burton proved a very general stability criterion for vortex flows in bounded domains, asserting that any steady vortex flow as the strict local maximizer of the kinetic energy on some given rearrangement class is stable in $L^p$ norm. Based on the similar idea, nonlinear orbital stability for vortex pairs in the whole plane was proved in \cite{B6}. The method used in this paper is mostly inspired by \cite{B5,B6}.

The orbital stability of $\mathcal{S}_\lambda$ is stated as follows.

\begin{theorem}\label{os}
Let $\frac{3}{2}\leq p<+\infty$, $\lambda>|D|^{-1}$, and $S_\lambda$ be defined by \eqref{s}. Then $\mathcal{S}_\lambda$ is orbitally stable in $L^p$ norm, or equivalently, for any $\varepsilon>0$, there exists a $\delta>0$, such that for any $\omega_0\in L^p(D)$ satisfying $dist_p(\omega_0,\mathcal{S}_\lambda)<\delta$,  we have  $dist_p(\omega_t,\mathcal{S}_\lambda)<\varepsilon$ for all $t>0$, where $\omega_t$ is a weak solution to the vorticity equation with initial vorticity $\omega_0$, and $dist_p(\omega_0,\mathcal{S}_\lambda)$ is defined by
\begin{equation}
dist_p(\omega_0,\mathcal{S}_\lambda):=\inf_{\omega\in\mathcal{S}_\lambda}\|\omega_0-\omega\|_{L^p(D)}.
\end{equation}
\end{theorem}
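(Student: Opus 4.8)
The plan is to deduce orbital stability from two facts: that $\mathcal{E}$ is conserved along the flow, and that \emph{every} near-maximizer of $\mathcal{E}$ lying in the rearrangement class of the maximizers is $L^p$-close to $\mathcal{S}_\lambda$. First I would note that $\mathcal{E}(w)=E(w)+\tfrac{\Omega}{2}J(w)$, where $E(w)=\tfrac12\int_D\int_D G(x,y)w(x)w(y)\,dx\,dy$ is the kinetic energy and $J(w)=\int_D|x|^2w\,dx$ is the angular momentum. By parts (iii) and (iv) of Theorem A both are conserved along any weak solution precisely when $p\ge 3/2$, hence $\mathcal{E}(\omega_t)=\mathcal{E}(\omega_0)$ for all $t>0$; this is the single place where the hypothesis $p\ge 3/2$ is used. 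By Theorem \ref{Thm} every element of $\mathcal{S}_\lambda$ is a patch $\lambda I_A$ with $|A|=1/\lambda$, so $\mathcal{S}_\lambda$ is contained in one rearrangement class, which I denote $R^\ast:=R_{w^\lambda}$.

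The heart of the argument is the following compactness statement, which I would prove first: \emph{if $v_n\in R^\ast$ and $\mathcal{E}(v_n)\to\sup_{K_\lambda(D)}\mathcal{E}$, then $\mathrm{dist}_p(v_n,\mathcal{S}_\lambda)\to 0$.} The key analytic input is that $\mathcal{E}$ is sequentially weakly continuous on bounded subsets of $L^p(D)$: the term $J$ is a weakly continuous linear functional, while for $E$ one writes $E(w)=\tfrac12\langle Gw,w\rangle$ and uses that $G:L^p(D)\to W^{2,p}(D)$ is bounded and the embedding $W^{2,p}(D)\hookrightarrow L^{p'}(D)$ is compact (for $p>1$), so $v_n\rightharpoonup v$ forces $Gv_n\to Gv$ strongly in $L^{p'}(D)$ and hence $\langle Gv_n,v_n\rangle\to\langle Gv,v\rangle$. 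Since $K_\lambda(D)$ is convex, bounded in $L^\infty(D)$ and strongly closed, it is weakly compact, so along a subsequence $v_n\rightharpoonup v\in K_\lambda(D)$ with $\mathcal{E}(v)=\sup\mathcal{E}$, i.e. $v\in\mathcal{S}_\lambda$, a patch $v=\lambda I_A$. The decisive step is to upgrade this to strong convergence: writing $v_n=\lambda I_{A_n}$ with $|A_n|=|A|=1/\lambda$,
\[
\|v_n-v\|_{L^p(D)}^p=\lambda^p|A_n\triangle A|=2\lambda^p\Big(|A|-\int_D I_{A_n}I_A\,dx\Big),
\]
and $\int_D I_{A_n}I_A\,dx\to|A|$ by testing $v_n\rightharpoonup v$ against $I_A\in L^{p'}(D)$; thus $v_n\to v$ in $L^p(D)$ and $\mathrm{dist}_p(v_n,\mathcal{S}_\lambda)\to0$. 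A routine subsequence argument upgrades this to the full sequence.

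To reach arbitrary $L^p$ perturbations I would use that the flow preserves the rearrangement class, $\omega_t\in R_{\omega_0}$ (Theorem A(ii)), together with a rearrangement lemma: the decreasing rearrangement is nonexpansive in $L^p$, and for any $u\in L^p(D)$ there is $\bar u\in R^\ast$ (namely $w^\lambda$ ordered along the level sets of $u$) with $\|u-\bar u\|_{L^p(D)}=\|u^\ast-(w^\lambda)^\ast\|_{L^p(D)}$. Given $\omega_0$ with $\mathrm{dist}_p(\omega_0,\mathcal{S}_\lambda)<\delta$, choose $\tilde\omega_0\in\mathcal{S}_\lambda\subset R^\ast$ with $\|\omega_0-\tilde\omega_0\|_{L^p(D)}<\delta$; since $\omega_t^\ast=\omega_0^\ast$ and $(w^\lambda)^\ast=\tilde\omega_0^\ast$, the lemma yields $\bar\omega_t\in R^\ast$ with $\|\omega_t-\bar\omega_t\|_{L^p(D)}\le\|\omega_0^\ast-\tilde\omega_0^\ast\|_{L^p(D)}\le\|\omega_0-\tilde\omega_0\|_{L^p(D)}<\delta$ for every $t$. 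Finally, the bound $|\mathcal{E}(w_1)-\mathcal{E}(w_2)|\le C\big(\|w_1\|_{L^p(D)}+\|w_2\|_{L^p(D)}+1\big)\|w_1-w_2\|_{L^p(D)}$, valid on bounded sets and coming from the same mapping properties of $G$, gives $\mathcal{E}(\bar\omega_t)\ge\mathcal{E}(\omega_t)-\eta(\delta)=\mathcal{E}(\omega_0)-\eta(\delta)\ge\sup\mathcal{E}-2\eta(\delta)$ with $\eta(\delta)\to0$, while $\bar\omega_t\in R^\ast\subset K_\lambda(D)$ gives $\mathcal{E}(\bar\omega_t)\le\sup\mathcal{E}$. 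Thus $\{\bar\omega_t\}$ is a near-maximizing family in $R^\ast$, and the compactness statement forces $\mathrm{dist}_p(\bar\omega_t,\mathcal{S}_\lambda)$ to be small uniformly in $t$; then $\mathrm{dist}_p(\omega_t,\mathcal{S}_\lambda)\le\delta+\mathrm{dist}_p(\bar\omega_t,\mathcal{S}_\lambda)$. I would phrase this last implication as a contradiction argument over sequences $\delta_k\to0$ and times $t_k$ to make the uniformity transparent.

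The main obstacle is the passage from weak to strong $L^p$ convergence inside the compactness statement: for general densities a maximizing sequence converges only weakly, and weak limits are generally not attained in norm. This is resolved entirely by the rigidity from Theorem \ref{Thm} that every maximizer is a patch, through the identity $\|v_n-v\|_{L^p(D)}^p=\lambda^p|A_n\triangle A|$, which converts weak convergence of characteristic functions of equimeasurable sets into convergence of symmetric differences. A secondary point needing care is the rearrangement lemma bridging $R_{\omega_0}$ and $R^\ast$; this is a standard property of monotone rearrangements, but I would isolate it as a separate lemma. It is worth stressing that the convexity of $\mathcal{E}$ enters only indirectly, via Theorem \ref{Thm}, to identify the maximizers as patches; the stability mechanism itself is the conservation of $\mathcal{E}$ and of the rearrangement class combined with the weak continuity of $\mathcal{E}$.
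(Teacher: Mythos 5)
Your argument is correct, and its first half (the compactness mechanism) is essentially the paper's: conservation of $\mathcal{E}=E+\tfrac{\Omega}{2}J$ for $p\ge 3/2$ via Theorem A(iii)--(iv), weak(-star) compactness of $K_\lambda(D)$ and weak continuity of $\mathcal{E}$, and then the rigidity of Theorem \ref{Thm} (every maximizer is a patch) to upgrade weak to strong convergence. You realize that upgrade through the identity $\|\lambda I_{A_n}-\lambda I_A\|_{L^p}^p=\lambda^p|A_n\triangle A|$, whereas the paper's Lemma \ref{com} instead observes that $\|w\|_{L^2}\le\lambda^{1/2}$ on all of $K_\lambda(D)$ with equality exactly for patches, so that weak convergence plus convergence of norms and uniform convexity of $L^2$ give strong convergence; the paper's version is slightly more general in that it handles arbitrary maximizing sequences in $K_\lambda(D)$, not only sequences of patches, though your restricted version suffices for your scheme. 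The genuine divergence is in how you pass from $L^p$-perturbations back to the constraint set: the paper solves the DiPerna--Lions linear transport equation \eqref{4-8} with the velocity field $\nabla^\perp Gv^n_t$ of the perturbed flow, obtaining a companion $w^n(\cdot,t)\in K_\lambda(D)$ whose $L^p$-distance to $v^n_t$ is exactly conserved (Lemma \ref{dl}); you instead project $\omega_t$ onto the rearrangement class $R^\ast$ using Theorem A(ii) together with the Crandall--Tartar/Chiti nonexpansivity of the decreasing rearrangement and a bathtub-type attainment lemma for two-valued targets. Your route avoids the transport-equation machinery entirely and is arguably more elementary, at the cost of importing the rearrangement lemma, which you rightly flag should be stated and proved separately (the attainment step needs the non-atomicity of Lebesgue measure to split ties on the level set $\{u=s\}$); the paper's route avoids any rearrangement inequality but leans on the well-posedness theory of Lemma \ref{dl}. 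Both close the argument with the same contradiction-over-sequences device, so I see no gap.
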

To prove Theorem \ref{os}, the key point is compactness. In \cite{B6}, compactness was obtained by a Concentration-Compactness argument. In this paper, compactness comes from the fact any maximizer must be a vortex patch(see Lemma \ref{com} in Section 3). The same idea was also used in \cite{CWW} to prove nonlinear orbital stability for steady vortex patches.

\section{Proof of Theorem \ref{Thm}}
In this section we give the proof of Theorem \ref{Thm}. As mentioned in Section 1, we consider the maximization of $\mathcal{E}$ in $K_\lambda(D)$, where $\mathcal{E}$ and $K_\lambda(D)$ are defined by \eqref{E} and \eqref{K}.
Note that by Fubini's theorem and integration by parts, we have for any $w\in K_\lambda(D)$
\begin{equation}\label{301}
\begin{split}
 \mathcal{E}(w)&=\frac{1}{2}\int_D\int_DG(x,y)w(x)w(y)dxdy+\frac{\Omega}{2}\int_D|x|^2w(x)dx\\
 &=\frac{1}{2}\int_DGw(x)w(x)dx+\frac{\Omega}{2}\int_D|x|^2w(x)dx\\
 &=\frac{1}{2}\int_D|\nabla Gw(x)|^2dx+\frac{\Omega}{2}\int_D|x|^2w(x)dx.
\end{split}
\end{equation}
We also assume throughout this paper that $\lambda > 1/|D|$ such that $K_\lambda(D)$ is not empty.

An absolute maximizer for $\mathcal{E}$ over $K_\lambda(D)$ can be easily found by the direct method. Indeed, we have

\begin{proposition}\label{802}
There exists $w^\lambda \in K_\lambda(D) $ such that
\begin{equation}\label{303}
 \mathcal{E}(w^\lambda)= \sup_{{w} \in K_\lambda(D)}\mathcal{E}({w}).
\end{equation}
\end{proposition}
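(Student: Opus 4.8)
The plan is to apply the direct method of the calculus of variations. The admissible set $K_\lambda(D)$ is a bounded, convex subset of $L^2(D)$ cut out by uniform pointwise bounds and a single linear (integral) constraint, so the natural strategy has four steps: (i) show $\mathcal{E}$ is bounded above on $K_\lambda(D)$; (ii) extract a weakly convergent maximizing sequence; (iii) verify that the weak limit still lies in $K_\lambda(D)$; and (iv) pass to the limit in $\mathcal{E}$.

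First I would check that $\mathcal{E}$ is bounded above. The linear term is controlled trivially: since $|x|<1$ on $D$ and $\int_D w=1$, one has $0\le \frac{\Omega}{2}\int_D|x|^2w\le\frac{\Omega}{2}$. For the quadratic term I would use the identity $\mathcal{E}(w)=\frac12\int_D|\nabla Gw|^2\,dx+\frac{\Omega}{2}\int_D|x|^2w\,dx$ recorded in \eqref{301}. Because every $w\in K_\lambda(D)$ satisfies $\|w\|_{L^2(D)}^2\le\lambda\int_Dw=\lambda$ (using $w^2\le\lambda w$ pointwise), standard $L^2$ elliptic estimates give $\|Gw\|_{H^2(D)}\le C\|w\|_{L^2(D)}\le C\sqrt\lambda$, and hence $\|\nabla Gw\|_{L^2(D)}$ is bounded uniformly over $K_\lambda(D)$. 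Thus $\sup_{K_\lambda(D)}\mathcal{E}=:M<+\infty$.

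Next, let $\{w_n\}\subset K_\lambda(D)$ be a maximizing sequence, $\mathcal{E}(w_n)\to M$. Since $0\le w_n\le\lambda$ a.e., the sequence is bounded in $L^\infty(D)$, hence in $L^2(D)$, so after passing to a subsequence (not relabelled) we have $w_n\rightharpoonup w^\lambda$ weakly in $L^2(D)$. To see $w^\lambda\in K_\lambda(D)$, I would note that $K_\lambda(D)$ is convex and strongly closed in $L^2(D)$, hence weakly closed by Mazur's lemma; equivalently, the constraint $\int_Dw_n=1$ passes to the limit by testing weak convergence against the constant function $1\in L^2(D)$, while the pointwise bounds $0\le w^\lambda\le\lambda$ are preserved because they define a convex, strongly closed order constraint.

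The crux is the passage to the limit in $\mathcal{E}$. The linear term is weakly continuous because $|x|^2\in L^2(D)$ is an admissible test function, so $\int_D|x|^2w_n\to\int_D|x|^2w^\lambda$. For the quadratic term, the key observation is that the Green operator $G:L^2(D)\to L^2(D)$ is \emph{compact}: its kernel $G(x,y)=-\frac{1}{2\pi}\ln|x-y|-h(x,y)$ lies in $L^2(D\times D)$, so $G$ is Hilbert--Schmidt (alternatively, $G:L^2(D)\to H^1_0(D)$ is compact by elliptic regularity combined with the Rellich embedding). Consequently $w_n\rightharpoonup w^\lambda$ in $L^2(D)$ forces $Gw_n\to Gw^\lambda$ strongly in $L^2(D)$, and pairing the weak convergence of $w_n$ with the strong convergence of $Gw_n$ yields $\int_D(Gw_n)w_n\to\int_D(Gw^\lambda)w^\lambda$. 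Hence $\mathcal{E}(w_n)\to\mathcal{E}(w^\lambda)$, so $\mathcal{E}(w^\lambda)=M$ and $w^\lambda$ is the desired maximizer. The only genuinely non-routine point is this weak continuity of the quadratic energy, and I expect it to be the step requiring the most care; once the compactness of $G$ is invoked it reduces to the standard weak-times-strong limit, and every other step follows directly from the boundedness and convexity of $K_\lambda(D)$.
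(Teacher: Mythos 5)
Your proof is correct and follows essentially the same route as the paper: the direct method, with weak closedness of $K_\lambda(D)$ via convexity and Mazur's lemma, and weak continuity of $\mathcal{E}$ coming from compactness of the Green operator. The only cosmetic difference is that you work with weak $L^2$ convergence and the Hilbert--Schmidt property of $G$, whereas the paper uses weak-star convergence in $L^\infty(D)$ and upgrades $Gw_n\to Gw$ to $C^1(\overline{D})$ via $L^p$ estimates; both yield the same weak-times-strong limit for the quadratic term.
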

\begin{proof}
Firstly we show that $K_\lambda(D)$ is sequentially compact in $L^\infty(D)$ in the weak star topology.
In fact, since $K_\lambda(D)$ is a closed and convex subset of $L^2(D)$ in the strong topology, we conclude from Mazur's lemma that $K_\lambda(D)$ is closed in the weak topology of $L^2(D)$, which implies that $K_\lambda(D)$ is closed in $L^\infty(D)$ in the weak star topology.

Now we prove that $\mathcal{E}$ is a sequentially weakly star continuous functional in $L^\infty(D)$. Let $\{w_n\}$ be a sequence in $L^\infty(D)$ such that $w_n\rightarrow w$ weakly star in $ L^\infty(D)$ as $n\rightarrow+\infty$. Then it is easy to see that $w_n\rightarrow w$ weakly in $ L^p(D)$
for any $1<p<+\infty$. By $L^p$ estimate we have $Gw_n\rightarrow Gw$ in $ C^1(\overline{D})$. Taking into account \eqref{301} we get $\lim_{n\to\infty}E(w_n)=E(w)$.

Since $G(x,y)\in L^1(D\times D)$, it follows that $E$ is bounded from above in $K_\lambda(D)$, that is, $\sup_{w\in K_{\lambda}(D)}\mathcal{E}(w)<+\infty$. Then we can take a sequence $\{w_n\}$ such that $\lim_{n\rightarrow}\mathcal{E}(w_n)=\sup_{w\in K_{\lambda}(D)}\mathcal{E}(w)$.  Without loss of generality, we assume that $w_n\rightarrow w^\lambda$ weakly star in $ L^\infty(D)$ for some $w^\lambda\in K_{\lambda}(D)$ as $n\rightarrow+\infty$. It follows easily from the above discussion that $\mathcal{E}(w^\lambda)= \sup_{{w} \in K_\lambda(D)}\mathcal{E}({w})$.
\end{proof}

In the following lemma, by choosing suitable test functions we study the profile of $w^\lambda$.
\begin{lemma}\label{805}
For any maximizer $w^\lambda$ obtained in Lemma $\ref{802}$, we have
\begin{equation}\label{304}
 w^\lambda=\lambda I_{A^\lambda} + 2\Omega I_{B^\lambda}  ~\text{~ a.e. in} ~D,
\end{equation}
where
\begin{equation}\label{AB}
 A^\lambda=\{x\in D\mid Gw^\lambda(x)+\frac{\Omega}{2}|x|^2>\mu^\lambda\} ~\text{and}~ B^\lambda=\{x\in D\mid Gw^\lambda(x)+\frac{\Omega}{2}|x|^2=\mu^\lambda\},
 \end{equation}
and the Lagrange multiplier $\mu^\lambda>0$ is determined by $w^\lambda$ as follows
\begin{equation}\label{305}
\begin{split}
  \mu^\lambda &={\sup}_{\{x\in D\mid w(x)<\lambda\}}\left(Gw(x)+\frac{\Omega}{2}|x|^2\right)
       ={\inf}_{\{x\in D\mid w(x)>0\}}\left(Gw(x)+\frac{\Omega}{2}|x|^2\right).
\end{split}
\end{equation}
\end{lemma}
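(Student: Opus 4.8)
The plan is to exploit the fact that $w^\lambda$ maximizes the convex quadratic functional $\mathcal{E}$ over the convex set $K_\lambda(D)$, which forces a linearized (``bathtub'') characterization of the optimal profile, and then to read off the precise value on the level set from the equation satisfied by the profile function. Throughout, write $\Psi:=Gw^\lambda+\frac{\Omega}{2}|x|^2$, so that \eqref{305} defines $\mu^\lambda$ as a threshold value of $\Psi$. Since $w^\lambda\in L^\infty(D)$, the $L^p$ elliptic estimate gives $Gw^\lambda\in W^{2,p}(D)$ for every $p<+\infty$, hence $\Psi\in C^1(\overline{D})$; in particular $A^\lambda$ and $C^\lambda:=\{\Psi<\mu^\lambda\}$ are open and $B^\lambda$ is closed.

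First I would compute the first variation. For any $w\in K_\lambda(D)$ the segment $w^\lambda+t(w-w^\lambda)$, $t\in[0,1]$, lies in $K_\lambda(D)$ by convexity, and since $\mathcal{E}$ is quadratic,
\[
\mathcal{E}(w^\lambda+t(w-w^\lambda))-\mathcal{E}(w^\lambda)=t\int_D\Psi\,(w-w^\lambda)\,dx+\frac{t^2}{2}\int_D\int_D G(x,y)(w-w^\lambda)(x)(w-w^\lambda)(y)\,dx\,dy.
\]
Dividing the inequality $\mathcal{E}(w^\lambda+t(w-w^\lambda))\le\mathcal{E}(w^\lambda)$ by $t>0$ and letting $t\to0^+$ yields $\int_D\Psi\,w\,dx\le\int_D\Psi\,w^\lambda\,dx$ for all $w\in K_\lambda(D)$; that is, $w^\lambda$ maximizes the \emph{linear} functional $w\mapsto\int_D\Psi\,w\,dx$ over $K_\lambda(D)$.

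Next I would extract the bathtub structure by a test-function (mass-swapping) argument. Let $\alpha$ be the essential supremum of $\Psi$ over $\{w^\lambda<\lambda\}$ and $\beta$ the essential infimum of $\Psi$ over $\{w^\lambda>0\}$; since $\int_Dw^\lambda=1$ and $w^\lambda\le\lambda$, the support $\{w^\lambda>0\}$ has measure $\ge1/\lambda>0$, so both are well defined. If $\alpha>\beta$, choose levels $s<t$ in $(\beta,\alpha)$ so that $P:=\{w^\lambda<\lambda\}\cap\{\Psi>t\}$ and $Q:=\{w^\lambda>0\}\cap\{\Psi<s\}$ both have positive measure; transferring a small equal amount of mass from $Q$ (where one may lower $w^\lambda$) to $P$ (where one may raise it) produces an admissible $w\in K_\lambda(D)$ with $\int_D\Psi\,w\,dx>\int_D\Psi\,w^\lambda\,dx$, contradicting the previous step. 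Hence $\alpha\le\beta$, so by definition $w^\lambda=\lambda$ a.e. on $\{\Psi>\alpha\}$ and $w^\lambda=0$ a.e. on $\{\Psi<\beta\}$; the continuity of $\Psi$ on the connected set $D$ (whose range is an interval) excludes $\alpha<\beta$, giving $\alpha=\beta=:\mu^\lambda$ together with the two expressions in \eqref{305}.

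The remaining and most delicate point is the value of $w^\lambda$ on $B^\lambda=\{\Psi=\mu^\lambda\}$, where the bathtub argument alone yields only $0\le w^\lambda\le\lambda$. Here I would use the identity
\[
-\Delta\Psi=-\Delta(Gw^\lambda)-\Delta\Big(\tfrac{\Omega}{2}|x|^2\Big)=w^\lambda-2\Omega\qquad\text{a.e. in }D,
\]
since $-\Delta(Gw^\lambda)=w^\lambda$ and $\Delta(|x|^2)=4$. Because $\Psi\in W^{2,p}_{loc}(D)$, I invoke the standard level-set lemma for Sobolev functions: if $u\in W^{1,1}_{loc}$ then $\nabla u=0$ a.e. on $\{u=\mathrm{const}\}$. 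Applying it first to $\Psi$ and then to each component of $\nabla\Psi$ gives $D^2\Psi=0$, in particular $\Delta\Psi=0$, a.e. on $B^\lambda$; hence $w^\lambda=2\Omega$ a.e. on $B^\lambda$, which combined with the preceding step gives $w^\lambda=\lambda I_{A^\lambda}+2\Omega I_{B^\lambda}$. Finally $\mu^\lambda>0$ follows because $G>0$ in $D\times D$ and $w^\lambda\ge0$, $\int_Dw^\lambda=1$ force $Gw^\lambda>0$ in $D$, so $\Psi>0$ on all of $\overline{D}$ ($\Psi=\Omega/2$ on $\partial D$), whence $\mu^\lambda\ge\min_{\overline{D}}\Psi>0$. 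I expect the level-set step to be the main obstacle: it is what pins down the exact constant $2\Omega$ and genuinely requires the $W^{2,p}$ regularity of $\Psi$ rather than mere continuity.
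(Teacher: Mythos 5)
Your proof is correct and follows essentially the same route as the paper: a first-variation/mass-transfer argument to obtain $\sup_{\{w^\lambda<\lambda\}}\Psi\le\inf_{\{w^\lambda>0\}}\Psi$, continuity of $\Psi$ to merge the two thresholds into a single $\mu^\lambda$, and the Sobolev level-set lemma ($\nabla u=0$ a.e.\ on $\{u=\mathrm{const}\}$, applied twice) to force $-\Delta\Psi=0$ and hence $w^\lambda=2\Omega$ a.e.\ on $B^\lambda$. The only differences are presentational (you linearize via convexity of the segment rather than via the explicit perturbations $z_0,z_1$), and you additionally justify $\mu^\lambda>0$, which the paper states but does not prove.
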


\begin{proof}
Define a family of test functions $w_s=w^\lambda+s(z_0-z_1)$, $s>0$, where $z_0$ and $z_1$ satisfy
\begin{equation}
\begin{cases}
z_0,z_1\in L^\infty(D),\,\,z_0,z_1\geq 0\text{ a.e. in }D,

 \\ \int_Dz_0(x)dx=\int_D z_1(x)dx,

 \\z_0=0 \quad\text{in } D\setminus\{x\in D\mid w^\lambda(x)\leq\lambda-\delta\},
 \\z_1=0 \quad\text{in } D\setminus\{x\in D\mid w^\lambda(x)\geq\delta\}.
\end{cases}
\end{equation}
Here $\delta$ is a small positive number. It is easy to see that for fixed $z_0,z_1$ and $\delta$, if $s$ is sufficiently small, $w_s$ belongs to $   K_\lambda(D)$. Since $w^\lambda$ is a maximizer, we have
\[0\geq\frac{d\mathcal{E}(w_s)}{ds}\bigg|_{s=0^+}=\int_Dz_0(x)\left(Gw^\lambda(x)+\frac{\Omega}{2}|x|^2\right)dx-\int_Dz_1(x)\left(Gw^\lambda(x)
+\frac{\Omega}{2}|x|^2\right)dx.\]
By the choice of $z_0$ and $z_1$ we deduce that
\begin{equation}\label{1-103}
\sup_{\{x\in D\mid w^\lambda(x)<\lambda\}}\left(Gw^\lambda+\frac{\Omega}{2}|x|^2\right)\leq\inf_{\{x\in D\mid w^\lambda(x)>0\}}\left(Gw^\lambda+\frac{\Omega}{2}|x|^2\right).
\end{equation}
By the continuity of $Gw^\lambda+\frac{\Omega}{2}|x|^2$ , \eqref{1-103} is in fact an equality, that is,
\begin{equation}
\sup_{\{x\in D\mid w^\lambda(x)<\lambda\}}\left(Gw^\lambda+\frac{\Omega}{2}|x|^2\right)=\inf_{\{x\in D\mid w^\lambda(x)>0\}}\left(Gw^\lambda+\frac{\Omega}{2}|x|^2\right).
\end{equation}
Set
\[\mu^\lambda:=\sup_{\{x\in D\mid w^\lambda(x)<\lambda\}}\left(Gw^\lambda+\frac{\Omega}{2}|x|^2\right)=\inf_{\{x\in D\mid w^\lambda(x)>0\}}\left(Gw^\lambda+\frac{\Omega}{2}|x|^2\right).\]
It is easy to check that
\begin{equation}
\begin{cases}
w^\lambda=0\text{\,\,\,\,\,\,a.e.\,} \text{in }\{x\in D\mid Gw^\lambda(x)+\frac{\Omega}{2}|x|^2<\mu^\lambda\},
 \\ w^\lambda=\lambda\text{\,\,\,\,\,\,a.e.\,} \text{in }\{x\in D\mid Gw^\lambda(x)+\frac{\Omega}{2}|x|^2>\mu^\lambda\}.
\end{cases}
\end{equation}
On the level set $\{x\in D\mid Gw^\lambda(x)+\frac{\Omega}{2}|x|^2=\mu^\lambda\}$, by the property of Sobolev functions, we have $-\Delta (G\omega^\lambda+\frac{\Omega}{2}|x|^2)=0\text{\,\,a.e.}$ on $\{x\in D\mid Gw^\lambda(x)+\frac{\Omega}{2}|x|^2=\mu^\lambda\}$, from which we obtain
$w^\lambda=2\Omega$ a.e. on $\{x\in D\mid Gw^\lambda(x)+\frac{\Omega}{2}|x|^2=\mu^\lambda\}$. The proof is completed.

\end{proof}

\begin{remark}
In Lemma \ref{805}, we only show that for fixed $w^\lambda$ the Lagrange multiplier $\mu^\lambda $ is unique, however, the mapping from $\lambda$ to $\mu ^\lambda$ may be multiple-valued.
\end{remark}

\begin{proposition}\label{patch}
Suppose $w^\lambda$ is a maximizer and $\lambda\neq2{\Omega}$, then $|B^\lambda|=0$.
\end{proposition}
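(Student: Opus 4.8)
The plan is to implement exactly the strategy announced in the introduction: use the strict convexity of $\mathcal{E}$ to show that $w^\lambda$ is the \emph{unique} maximizer of the linear functional $\mathcal{Q}$ from \eqref{qqq} over $K_\lambda(D)$, and then read off $|B^\lambda|=0$ from this uniqueness. Throughout I write $\Phi:=Gw^\lambda+\frac{\Omega}{2}|x|^2$, so that $A^\lambda=\{\Phi>\mu^\lambda\}$, $B^\lambda=\{\Phi=\mu^\lambda\}$ and $\mathcal{Q}(w)=\int_D\Phi\,w\,dx$. First I would record that $\mathcal{E}$ is strictly convex on the convex set $K_\lambda(D)$. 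From \eqref{301}, $\mathcal{E}(w)=\frac{1}{2}\int_D|\nabla Gw|^2dx+\frac{\Omega}{2}\int_D|x|^2w\,dx$; the second term is linear, and the quadratic form $w\mapsto\int_D|\nabla Gw|^2dx$ is nonnegative and vanishes only when $Gw\equiv0$, i.e. only when $w=-\Delta(Gw)=0$. Hence it is positive definite and $\mathcal{E}$ is strictly convex.

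Next I would check that $w^\lambda$ maximizes $\mathcal{Q}$. For any $w\in K_\lambda(D)$, since $\int_D(w-w^\lambda)dx=0$,
\[
\mathcal{Q}(w)-\mathcal{Q}(w^\lambda)=\int_D(\Phi-\mu^\lambda)(w-w^\lambda)\,dx,
\]
and by Lemma \ref{805} the integrand is pointwise $\le0$: on $A^\lambda$ we have $\Phi-\mu^\lambda>0$ and $w-\lambda\le0$, on $\{\Phi<\mu^\lambda\}$ we have $\Phi-\mu^\lambda<0$ and $w\ge0$, and on $B^\lambda$ it vanishes. To upgrade this to uniqueness, suppose $\tilde w\in K_\lambda(D)$ is another maximizer of $\mathcal{Q}$, set $\eta=\tilde w-w^\lambda$ and $g(t)=\mathcal{E}(w^\lambda+t\eta)$ for $t\in[0,1]$. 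Using the symmetry of $G$ one computes
\[
g(t)=\mathcal{E}(w^\lambda)+t\int_D\Phi\,\eta\,dx+\frac{t^2}{2}\int_D|\nabla G\eta|^2\,dx,
\]
so $g'(0)=\int_D\Phi\,\eta\,dx=\mathcal{Q}(\tilde w)-\mathcal{Q}(w^\lambda)=0$. If $\eta\neq0$, the $t^2$-coefficient is strictly positive, so $g$ is strictly convex; together with $g'(0)=0$ this forces $g(1)>g(0)$, i.e. $\mathcal{E}(\tilde w)>\mathcal{E}(w^\lambda)$, contradicting the maximality of $w^\lambda$ for $\mathcal{E}$. Hence $\tilde w=w^\lambda$, and $w^\lambda$ is the unique maximizer of $\mathcal{Q}$.

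Finally I would argue by contradiction to conclude $|B^\lambda|=0$. Suppose $|B^\lambda|>0$. By Lemma \ref{805}, $w^\lambda=2\Omega$ a.e. on $B^\lambda$. If $2\Omega>\lambda$ this already contradicts $w^\lambda\le\lambda$ on a set of positive measure, so $|B^\lambda|=0$. If instead $0<2\Omega<\lambda$ (recall $\Omega>0$ and $\lambda\neq2\Omega$), choose $E\subset B^\lambda$ with $0<|E|<|B^\lambda|$ small, and define $\tilde w$ to equal $w^\lambda$ off $B^\lambda$, $\tilde w=\lambda$ on $E$, and $\tilde w=c$ on $B^\lambda\setminus E$, where $c=\frac{2\Omega|B^\lambda|-\lambda|E|}{|B^\lambda|-|E|}$ is fixed by the mass constraint $\int_{B^\lambda}\tilde w\,dx=2\Omega|B^\lambda|$. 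For $|E|$ small $c$ is close to $2\Omega$ and lies in $(0,\lambda)$, so $\tilde w\in K_\lambda(D)$; and since $\Phi\equiv\mu^\lambda$ on $B^\lambda$, one has $\mathcal{Q}(\tilde w)=\mathcal{Q}(w^\lambda)$, so $\tilde w$ is also a maximizer of $\mathcal{Q}$ while $\tilde w\neq w^\lambda$ (as $\lambda\neq2\Omega$ on $E$). This contradicts the uniqueness just established, whence $|B^\lambda|=0$.

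The main obstacle is the uniqueness step, whose crux is the observation that the directional derivative of $\mathcal{E}$ at $w^\lambda$ \emph{is} the linear functional $\mathcal{Q}$; the combination of $g'(0)=0$, strict convexity, and the global $\mathcal{E}$-maximality of $w^\lambda$ is what pins down $\tilde w=w^\lambda$. Once uniqueness holds, the deduction $|B^\lambda|=0$ is elementary. The only delicate bookkeeping I anticipate is justifying strict convexity (the injectivity of $w\mapsto Gw$) and verifying that the redistributed competitor $\tilde w$ remains admissible, both of which are routine.
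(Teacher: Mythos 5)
Your proposal is correct and follows essentially the same route as the paper: both establish that $w^\lambda$ is the unique maximizer of the linear functional $\mathcal{Q}$ over $K_\lambda(D)$ via the positive-definiteness of the Green's quadratic form (your expansion of $g(t)$ along the segment is just a rearrangement of the paper's inequality $\int_D\int_DG(x,y)w_1(x)w_2(y)\,dxdy\le E(w_1)+E(w_2)$), and both then derive the contradiction by redistributing the mass $2\Omega|B^\lambda|$ on $B^\lambda$ to produce a second $\mathcal{Q}$-maximizer. Your competitor ($\tilde w=\lambda$ on $E$ and $c\approx2\Omega$ on $B^\lambda\setminus E$) differs only cosmetically from the paper's ($\bar w=\frac{\lambda+2\Omega}{2}$ on $C^\lambda$ and $0$ on $B^\lambda\setminus C^\lambda$).
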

\begin{proof}
We divide the proof into three steps.

\textbf{Step 1:} For any $w_1,w_2\in K_{\lambda}(D)$, we have
\begin{equation}\label{conve} \int_D\int_DG(x,y)w_1(x)w_2(y)dxdy\leq E(w_1)+E(w_2),\end{equation}
where\[E(w):=\frac{1}{2}\int_D\int_DG(x,y)w(x)w(y)dxdy,\,\,w\in K_\lambda(D),\]
and the equality holds if and only if $w_1=w_2$. In fact, we need only to observe that $E(w_1-w_2)\geq0$, and $E(w_1-w_2)=0$ if and only if $w_1=w_2.$
Combining the symmetry of the Green's function, we get \eqref{conve}.

\textbf{Step 2:} $w^\lambda$ is the unique maximizer of the following functional
  \begin{equation}\label{qqqq}
\mathcal{Q}(w):=\int_D(Gw^\lambda(x)+\frac{\Omega}{2}|x|^2)w(x)dx
  \end{equation}
in the admissible $K_\lambda(D)$. In fact, by Step 1 we have
\begin{equation}\label{uniq}
\begin{split}
\mathcal{Q}(w)&=\int_DGw^\lambda(x)w(x)dx+\int_D\frac{\Omega}{2}|x|^2w(x)dx\\
&\leq E(w^\lambda)+E(w)+\int_D\frac{\Omega}{2}|x|^2w(x)dx\\
&\leq E(w^\lambda)+\mathcal{E}(w^\lambda)\\
&=\mathcal{Q}(w^\lambda).
\end{split}
\end{equation}
Moreover, the equality holds if and only if $w=w^\lambda$, which is the desired result.

\textbf{Step 3:} If $\lambda\neq2{\Omega}$, then $|B^\lambda|=0$. In fact, if $\lambda<2\Omega,$ the conclusion is obvious. So we need only to prove the case $\lambda>2\Omega.$ Suppose $|B^\lambda|\neq0$. We define $\bar{w}=\lambda I_{A^\lambda}+2^{-1}({\lambda}+2\Omega) I_{C^\lambda}$, where $C^\lambda$ satisfying $C^\lambda\subset B^\lambda$ and $|C^\lambda|=4\Omega(\lambda+2\Omega)^{-1}|B^\lambda|$. Then it is easy to check that $\bar{w}\in K_\lambda(D)$ and $\bar{w}\neq w^\lambda$. But we have
\begin{equation}
\begin{split}
\mathcal{Q}(\bar{w})&=\int_D(Gw^\lambda(x)+\frac{\Omega}{2}|x|^2)w(x)dx\\
&=\lambda\int_{A^\lambda}(Gw^\lambda(x)+\frac{\Omega}{2}|x|^2)dx+\frac{({\lambda}+2\Omega)}{2}\int_{C^\lambda}(Gw^\lambda(x)+\frac{\Omega}{2}|x|^2)dx\\
&=\lambda\int_{A^\lambda}(Gw^\lambda(x)+\frac{\Omega}{2}|x|^2)dx+\frac{({\lambda}+2\Omega)}{2}|{C^\lambda}|\mu^\lambda \\
&=\lambda\int_{A^\lambda}(Gw^\lambda(x)+\frac{\Omega}{2}|x|^2)dx+2\Omega|B^\lambda|\mu^\lambda \\
&=\lambda\int_{A^\lambda}(Gw^\lambda(x)+\frac{\Omega}{2}|x|^2)dx+2\Omega\int_{B^\lambda}(Gw^\lambda(x)+\frac{\Omega}{2}|x|^2)dx\\
&=\mathcal{Q}(w^\lambda),
\end{split}
\end{equation}
which is a contradiction to Step 2.

\end{proof}
From Lemma \ref{805} and Proposition \ref{patch}, we can easily deduce the following
\begin{corollary}\label{coro}
For any $\lambda>|D|^{-1}$, any maximizer $w^\lambda$ has the from $w^\lambda=\lambda_{\tilde{A}^\lambda}$ a.e. for some $\tilde{A}^\lambda\subset D$.
\end{corollary}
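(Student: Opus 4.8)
The plan is to read off the conclusion directly from the two preceding results, Lemma \ref{805} and Proposition \ref{patch}, by splitting into two cases according to whether $\lambda$ equals $2\Omega$ or not. Lemma \ref{805} already gives that any maximizer has the form $w^\lambda=\lambda I_{A^\lambda}+2\Omega I_{B^\lambda}$ a.e.\ in $D$, where the level sets $A^\lambda$ and $B^\lambda$ are defined by \eqref{AB}. The key observation is that, because $A^\lambda$ is defined by a strict inequality and $B^\lambda$ by the corresponding equality, these two sets are automatically disjoint; hence the right-hand side genuinely represents a function taking at most the two nonzero values $\lambda$ (on $A^\lambda$) and $2\Omega$ (on $B^\lambda$).

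First I would dispose of the generic case $\lambda\neq 2\Omega$. Here Proposition \ref{patch} applies verbatim and yields $|B^\lambda|=0$. Consequently the term $2\Omega I_{B^\lambda}$ vanishes almost everywhere, so $w^\lambda=\lambda I_{A^\lambda}$ a.e.\ in $D$, and one simply sets $\tilde A^\lambda:=A^\lambda$.

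It remains to treat the degenerate case $\lambda=2\Omega$, which Proposition \ref{patch} deliberately excludes. In this case the coefficient $2\Omega$ attached to $B^\lambda$ coincides with $\lambda$, so the two nonzero values in the representation from Lemma \ref{805} merge into the single value $\lambda$. Using disjointness of $A^\lambda$ and $B^\lambda$, I would write
\begin{equation}
w^\lambda=\lambda I_{A^\lambda}+2\Omega I_{B^\lambda}=\lambda I_{A^\lambda}+\lambda I_{B^\lambda}=\lambda I_{A^\lambda\cup B^\lambda}\quad\text{a.e.\ in }D,
\end{equation}
and then take $\tilde A^\lambda:=A^\lambda\cup B^\lambda$. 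This gives the desired patch form in both cases, completing the argument.

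Since both substantive facts are already in hand, there is essentially no real obstacle here; the corollary is a short bookkeeping consequence. The only point that requires a moment's care is the boundary case $\lambda=2\Omega$, where one must notice that the two admissible density values collapse to one rather than attempting to prove $|B^\lambda|=0$ (which need not hold there). Verifying the disjointness of $A^\lambda$ and $B^\lambda$ directly from definition \eqref{AB} is immediate and is the only structural check needed before merging the sets.
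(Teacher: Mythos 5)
Your argument is correct and is exactly the deduction the paper has in mind (the paper simply states that the corollary follows ``easily'' from Lemma \ref{805} and Proposition \ref{patch} without writing it out): for $\lambda\neq 2\Omega$ one uses $|B^\lambda|=0$, and for $\lambda=2\Omega$ the two density values coincide so the disjoint sets $A^\lambda$ and $B^\lambda$ merge into $\tilde A^\lambda=A^\lambda\cup B^\lambda$. Nothing further is needed.
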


 In the following we analyze the limiting behavior of $w^\lambda$ as $\lambda\rightarrow+\infty$.
For simplicity, we will use $C$ to denote various positive numbers independent of $\lambda$.
 \begin{lemma}\label{808}
$\mathcal{E}(w^\lambda)=-({4\pi})^{-1}\ln{\varepsilon}+O(1),$ where $\varepsilon$ satisfies $\lambda \pi \varepsilon^2=1$.
\end{lemma}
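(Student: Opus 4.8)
The plan is to establish the matching two-sided bounds $\mathcal{E}(w^\lambda)\ge -(4\pi)^{-1}\ln\varepsilon - C$ and $\mathcal{E}(w^\lambda)\le -(4\pi)^{-1}\ln\varepsilon + C$ separately. Before either bound, observe that the angular-momentum term contributes only $O(1)$ for \emph{every} $w\in K_\lambda(D)$: since $0\le|x|^2\le1$ on $D$ and $\int_D w=1$, one has $0\le\frac{\Omega}{2}\int_D|x|^2w\,dx\le\frac{\Omega}{2}$. Hence it suffices to control the Green's energy $E(w^\lambda)=\frac12\int_D\int_D G(x,y)w^\lambda(x)w^\lambda(y)\,dxdy$ to leading order, and everywhere below ``$O(1)$'' means bounded independently of $\lambda$.

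For the lower bound I would simply test $\mathcal{E}$ against an explicit competitor. Fix the center $x_0=0$ and set $w_*=\lambda I_{B_\varepsilon(0)}$; the normalization $\lambda\pi\varepsilon^2=1$ guarantees $w_*\in K_\lambda(D)$ once $\lambda$ is large enough that $B_\varepsilon(0)\subset D$, so that $\mathcal{E}(w^\lambda)\ge\mathcal{E}(w_*)$. A change of variables $x=\varepsilon x'$, $y=\varepsilon y'$ gives $\int_{B_\varepsilon}\int_{B_\varepsilon}(-\ln|x-y|)\,dxdy=-\pi^2\varepsilon^4\ln\varepsilon-c_0\varepsilon^4$ with $c_0=\int_{B_1}\int_{B_1}\ln|x'-y'|\,dx'dy'$ finite, and since $\lambda^2\pi^2\varepsilon^4=1$ the singular part of $E(w_*)$ equals $-(4\pi)^{-1}\ln\varepsilon+O(1)$. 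Because $h$ is smooth, hence bounded, on the compact set $\overline{B_\varepsilon(0)}\times\overline{B_\varepsilon(0)}\subset D\times D$, the regular part contributes $O(\lambda^2\varepsilon^4)=O(1)$, which yields the lower bound.

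The upper bound is the heart of the matter. Writing $G=-\frac1{2\pi}\ln|x-y|-h$, I split $E(w^\lambda)$ into a logarithmic part and a regular part. For the logarithmic part I would invoke the bathtub (rearrangement) principle: extending $w^\lambda$ by zero to $\mathbb{R}^2$, for each fixed $x$ the quantity $\int(-\ln|x-y|)w^\lambda(y)\,dy$ is maximized, among all densities with $0\le w\le\lambda$ and $\int w=1$, by the one supported on the ball $B_\varepsilon(x)$ on which $-\ln|x-\cdot|$ is largest; the explicit value $\lambda\int_{B_\varepsilon(x)}(-\ln|x-y|)\,dy=-\ln\varepsilon+\frac12$ then gives the uniform-in-$x$ bound $\int_D(-\ln|x-y|)w^\lambda(y)\,dy\le-\ln\varepsilon+C$. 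Integrating this against $w^\lambda(x)$ and using $\int_D w^\lambda=1$ bounds the logarithmic part of $E(w^\lambda)$ by $-(4\pi)^{-1}\ln\varepsilon+O(1)$. For the regular part I would use the explicit disk formula: a short computation gives $-h(x,y)=\frac1{4\pi}\ln\big(|x-y|^2+(1-|x|^2)(1-|y|^2)\big)$, and since the argument never exceeds $|x-y|^2+1\le5$ on $D\times D$, one has $-h\le(4\pi)^{-1}\ln5$; therefore $-\frac12\int_D\int_D h\,w^\lambda w^\lambda\le\frac{\ln5}{8\pi}\int_D\int_D w^\lambda w^\lambda=O(1)$. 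Combining the two parts gives $E(w^\lambda)\le-(4\pi)^{-1}\ln\varepsilon+O(1)$, and adding back the $O(1)$ angular-momentum term completes the proof.

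The step I expect to be most delicate is the uniform bathtub estimate. One must verify that replacing the truncated ball $B_\varepsilon(x)\cap D$ by the full ball $B_\varepsilon(x)$ only increases the integral, so that the bound persists up to the boundary, and that the rearrangement comparison remains valid even though $-\ln|x-y|$ changes sign on $D$. Both points are settled by noting that the extremal density for $\int g\,w$ under $0\le w\le\lambda$, $\int w=1$ concentrates on the superlevel sets of $g=-\ln|x-\cdot|$, which are balls centered at $x$ regardless of the sign of $g$; everything else reduces to the routine constant computations indicated above.
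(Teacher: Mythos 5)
Your proof is correct, and its overall skeleton (two one-sided bounds, with the explicit competitor $\lambda I_{B_\varepsilon(0)}$ giving the lower bound) matches the paper's. The difference is in the upper bound: the paper controls the double logarithmic integral by the Riesz rearrangement inequality, replacing $w^\lambda$ by the symmetric decreasing rearrangement $\lambda I_{B_\varepsilon(0)}$ and quoting the boundedness of $h$ from below, whereas you freeze $x$ and apply the one-function bathtub principle to $y\mapsto -\ln|x-y|$, obtaining the uniform-in-$x$ bound $\int_D(-\ln|x-y|)w^\lambda(y)\,dy\le -\ln\varepsilon+\tfrac12$ and then integrating against $w^\lambda(x)\,dx$. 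Your route is more elementary and self-contained: the bathtub principle for a single linear functional is a one-line layer-cake argument, it sidesteps the sign issue for $-\ln|x-y|$ that forces the usual $\ln\frac{2}{|x-y|}$ normalization before Riesz can be applied, and your explicit disk formula $-h(x,y)=\frac{1}{4\pi}\ln\bigl(|x-y|^2+(1-|x|^2)(1-|y|^2)\bigr)$ makes the $O(1)$ bound on the regular part completely concrete rather than an appeal to general properties of the Green's function. The paper's Riesz-based argument is shorter to state and generalizes verbatim to domains where no closed form for $h$ is available, but for the disk your version is, if anything, tighter and easier to verify.
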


\begin{proof}
Choose a test function $\hat{w}\in K_\lambda(D)$ defined by
\begin{equation}\label{311}
\hat{w}=\lambda I_{B_\varepsilon(0)}.
\end{equation}
Since $w^\lambda$ is a maximizer, we have $\mathcal{E}(w^\lambda)\ge \mathcal{E}(\hat{w})$. By a simple calculation, we obtain
\begin{equation*}
\begin{split}
      \mathcal{E}(\hat{w})&=\frac{1}{2}\int_D{\int_DG(x,y)\hat{w}(x)\hat{w}(y)dx}dy+\frac{\Omega}{2}\int_D|x|^2\hat{w}(x)dx\\
      &\geq \frac{1}{2}\int_D\int_D-\frac{1}{2\pi}\ln|x-y|\hat{w}(x)\hat{w}(y)dxdy-C\\
      &=\frac{\lambda^2}{2}\int_{B_\varepsilon(0)}\int_{B_\varepsilon(0)}-\frac{1}{2\pi}\ln|x-y|dxdy-C\\
      &\geq\frac{\lambda^2}{2}\int_{B_\varepsilon(0)}\int_{B_\varepsilon(0)}-\frac{1}{2\pi}\ln(2\varepsilon)dxdy-C\\
      &= -({4\pi})^{-1}\ln{\varepsilon}-C,
\end{split}
\end{equation*}
where we used the fact \[\int_D\int_Dh(x,y)\hat{w}(x)\hat{w}(y)dxdy\rightarrow h(0,0) \text{ as }\lambda\rightarrow +\infty.\]
On the other hand,
\begin{equation*}
\begin{split}
      \mathcal{E}(w^\lambda)&=\frac{1}{2}\int_D{\int_DG(x,y)w^\lambda(x)w^\lambda(y)dx}dy+\frac{\Omega}{2}\int_D|x|^2w^\lambda(x)dx\\
      &\leq \frac{1}{2}\int_D\int_D-\frac{1}{2\pi}\ln|x-y|\hat{w}(x)\hat{w}(y)dxdy+C\\
      &\leq -({4\pi})^{-1}\ln{\varepsilon}+C,
\end{split}
\end{equation*}
where we used Riesz's rearrangement inequality(see \cite{LL}, 3.7) and the fact that $h(x,y)$ is bounded from below in $D\times D$.~The proof is completed.
\end{proof}

Now we estimate the energy of the ``vortex core". Define $\psi^\lambda = Gw^\lambda+\frac{\Omega}{2}|x|^2-\mu^\lambda$. The kinetic energy of the ``vortex core" is defined as follows:
\begin{equation}\label{309}
T(w^\lambda)=\frac{1}{2}\int_D{|\nabla\psi^\lambda_+(x)|^2}dx,
\end{equation}
where $\psi^\lambda_+= \max\{\psi^\lambda, 0\}$.

\begin{lemma}\label{810}
$T(w^\lambda)\le  C.$
\end{lemma}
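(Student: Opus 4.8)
The plan is to reduce the estimate to a torsional-rigidity bound on the vortex core and invoke the Saint--Venant (Talenti) comparison inequality. The starting observation is that, since $-\Delta Gw^\lambda=w^\lambda$ with $Gw^\lambda=0$ on $\partial D$ and $\Delta(\tfrac{\Omega}{2}|x|^2)=2\Omega$, the function $\psi^\lambda=Gw^\lambda+\tfrac{\Omega}{2}|x|^2-\mu^\lambda$ satisfies $-\Delta\psi^\lambda=w^\lambda-2\Omega$ weakly in $D$, with $Gw^\lambda\in W^{2,p}(D)$ for every $p<\infty$, hence $\psi^\lambda\in C^1(\overline D)$. First I would record that $\psi^\lambda\le 0$ on $\partial D$: on $\partial D$ one has $\psi^\lambda=\tfrac{\Omega}{2}-\mu^\lambda$, and the characterization $\mu^\lambda=\sup_{\{w^\lambda<\lambda\}}(Gw^\lambda+\tfrac{\Omega}{2}|x|^2)$ from Lemma~\ref{805}, together with $Gw^\lambda>0$ in $D$, $Gw^\lambda\to0$ and $\tfrac{\Omega}{2}|x|^2\to\tfrac{\Omega}{2}$ as $x\to\partial D$, forces $\mu^\lambda\ge\Omega/2$. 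Consequently $\psi^\lambda_+\in H^1_0(D)$, its support lies in $\overline{A^\lambda}$, the core $A^\lambda=\{\psi^\lambda>0\}$ does not reach $\partial D$, and $\psi^\lambda$ vanishes on $\partial A^\lambda$.

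Next, restricting to the regime of interest $\lambda>2\Omega$, I integrate by parts. Using $\nabla\psi^\lambda_+=\chi_{A^\lambda}\nabla\psi^\lambda$ and $w^\lambda=\lambda$ on $A^\lambda$, the vanishing of $\psi^\lambda$ on $\partial A^\lambda$ gives
\[
2T(w^\lambda)=\int_{A^\lambda}|\nabla\psi^\lambda|^2\,dx=-\int_{A^\lambda}\psi^\lambda\Delta\psi^\lambda\,dx=(\lambda-2\Omega)\int_{A^\lambda}\psi^\lambda\,dx .
\]
This identity exhibits $\psi^\lambda|_{A^\lambda}$ as $(\lambda-2\Omega)$ times the torsion function of $A^\lambda$, since $-\Delta\psi^\lambda=\lambda-2\Omega$ (a positive constant) in $A^\lambda$ with zero boundary value. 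The entire estimate is thereby reduced to bounding $\int_{A^\lambda}\psi^\lambda\,dx$.

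To bound this quantity I would invoke the Talenti/Saint--Venant comparison principle: among domains of fixed measure the ball maximizes torsional rigidity. Precisely, if $v$ solves $-\Delta v=\lambda-2\Omega$ in the ball $A^*$ centred at the origin with $|A^*|=|A^\lambda|$ and $v=0$ on $\partial A^*$, then the symmetric decreasing rearrangement obeys $(\psi^\lambda)^*\le v$, whence $\int_{A^\lambda}\psi^\lambda\le\int_{A^*}v$. Since $|B^\lambda|=0$ by Proposition~\ref{patch}, one has $|A^\lambda|=1/\lambda=\pi\varepsilon^2$, so $A^*=B_\varepsilon(0)$, $v(x)=\tfrac{\lambda-2\Omega}{4}(\varepsilon^2-|x|^2)$, and $\int_{A^*}v=\tfrac{(\lambda-2\Omega)\pi\varepsilon^4}{8}$. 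Substituting into the identity above,
\[
2T(w^\lambda)\le(\lambda-2\Omega)^2\frac{\pi\varepsilon^4}{8}\le\frac{\lambda^2\pi\varepsilon^4}{8}=\frac{(\lambda\pi\varepsilon^2)^2}{8\pi}=\frac{1}{8\pi},
\]
using $\lambda\pi\varepsilon^2=1$. This yields $T(w^\lambda)\le\tfrac{1}{16\pi}$, a bound independent of $\lambda$.

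The main technical point is the first step: establishing the a priori lower bound $\mu^\lambda\ge\Omega/2$ so that $\psi^\lambda_+\in H^1_0(D)$ and the boundary term in the integration by parts drops; everything afterward is a pure rearrangement estimate needing nothing beyond $|A^\lambda|=1/\lambda$. I emphasize that this route does \emph{not} use the diameter estimate (i) of Theorem~\ref{Thm}, which is essential because that estimate is itself deduced later from the present bound. A tempting alternative, passing through the energy via $2T(w^\lambda)\le\int_D\psi^\lambda_+ w^\lambda\,dx=\mathcal{E}(w^\lambda)+E(w^\lambda)-\mu^\lambda$, would instead demand a sharp lower bound $\mu^\lambda\ge 2\mathcal{E}(w^\lambda)-C$, whose proof appears to require the very localization of $A^\lambda$ one is trying to prove; the Saint--Venant comparison circumvents this circularity entirely.
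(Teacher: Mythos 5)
Your argument has one genuine gap, and it sits exactly at the step you yourself flag as the main technical point: the claim $\mu^\lambda\ge\Omega/2$. The characterization $\mu^\lambda=\sup_{\{w^\lambda<\lambda\}}\bigl(Gw^\lambda+\tfrac{\Omega}{2}|x|^2\bigr)$ only yields $\mu^\lambda\ge\Omega/2$ if the set $\{w^\lambda<\lambda\}$ contains points arbitrarily close to $\partial D$. Nothing available at this stage rules out the complementary scenario: since $Gw^\lambda+\tfrac{\Omega}{2}|x|^2$ is continuous on $\overline D$ with boundary value $\Omega/2$, if $\mu^\lambda<\Omega/2$ then $A^\lambda$ contains a full (thin) annulus $\{r_0<|x|<1\}$ on which $w^\lambda=\lambda$, and the supremum over $\{w^\lambda<\lambda\}$ never sees the boundary value $\Omega/2$ --- so the deduction is circular. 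Excluding a maximizer whose core partially hugs $\partial D$ requires exactly the kind of localization that the paper only obtains downstream of this lemma (Lemmas \ref{811} and \ref{812}, which show $\mu^\lambda\to+\infty$ and $\mathrm{diam}(\mathrm{supp}(w^\lambda))\le R_0\varepsilon$ using the present bound as input). If in fact $\mu^\lambda<\Omega/2$, then $\psi^\lambda_+=\Omega/2-\mu^\lambda>0$ on $\partial D$, so $\psi^\lambda_+\notin H^1_0(D)$, the integration by parts acquires a boundary term, $A^\lambda$ is not compactly contained in $D$, and $\psi^\lambda$ does not vanish on all of $\partial A^\lambda$, so the torsion-function/Talenti setup does not apply as stated. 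The paper sidesteps the issue entirely by testing the equation against $\psi^\lambda_+-\gamma^\lambda$ with $\gamma^\lambda:=\max(\Omega/2-\mu^\lambda,0)$, which lies in $H^1_0(D)$ whatever the sign of $\Omega/2-\mu^\lambda$ and needs only the trivial bound $\mu^\lambda\ge0$; the extra terms this generates are $O(1)$. Your proof can be repaired in the same spirit (the offending boundary term equals $(\Omega/2-\mu^\lambda)$ times the flux of $\nabla\psi^\lambda$ through $\partial D$, which is bounded by a constant depending only on $\Omega$), but as written the step is unjustified.

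Apart from this, the remainder of your argument is correct and genuinely different from the paper's. Once one has $2T(w^\lambda)=(\lambda-2\Omega)\int_{A^\lambda}\psi^\lambda\,dx$ with $\psi^\lambda$ vanishing on $\partial A^\lambda$, the Saint--Venant/Talenti comparison bounds the torsional rigidity of $A^\lambda$ by that of a disk of the same area $1/\lambda$ and gives the explicit, clean bound $T(w^\lambda)\le 1/(16\pi)$. The paper instead estimates $\int_D w^\lambda(\psi^\lambda_+-\gamma^\lambda)\,dx$ via H\"older together with the Sobolev inequality $\|f\|_{L^2}\le C\|\nabla f\|_{L^1}$ and closes the quadratic inequality $2T\le CT^{1/2}+C$. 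Your route is sharper where it applies, but it leans on $A^\lambda\Subset D$, on $|B^\lambda|=0$, and on $\lambda>2\Omega$, whereas the paper's chain uses only $\lambda|\{\psi^\lambda>\gamma^\lambda\}|\le1$ and is insensitive to all of these points.
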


\begin{proof}
Firstly it is easy to check that $\psi^\lambda$ satisfies the following elliptic equation
\begin{equation}\label{308}
\begin{cases}
-\Delta \psi^\lambda = w^\lambda-2\Omega &\text{in}~~~ D ,\\
 ~~~~~~~~~\psi^\lambda=\frac{\Omega}{2}-\mu^\lambda&\text{on}~~~\partial D.
\end{cases}
\end{equation}
Set~$\gamma^\lambda:=\max({\Omega}/{2}-\mu^\lambda,0)\in[0,\Omega/{2}]$. Let us multiply both sides by $\psi^\lambda_+-\gamma^\lambda\in H^1_0(D)$. By integration by parts we have
\begin{equation}\label{vor}
\begin{split}
  2T(w^\lambda)&= \int_D{|\nabla\psi^\lambda_+(x)|^2}dx\\
  &=\int_D(w^\lambda(x)-2\Omega)\psi^\lambda_+(x)dx-\gamma^\lambda\int_D(w^\lambda(x)-2\Omega)dx\\
                                   &\le\int_Dw^\lambda(\psi^\lambda_+(x)-\gamma^\lambda)dx+C\\
                                   &\le\lambda|\{x\in D\mid\psi^\lambda(x)>\gamma^\lambda\}|^{{1}/{2}}\left(\int_D(\psi^\lambda_+(x)-\gamma^\lambda)^2dx\right)^{{1}/{2}} +C\\
                                   &\le C\lambda|\{x\in D\mid\psi^\lambda(x)>0\}|^{{1}/{2}}\int_D{|\nabla\psi^\lambda_+(x)|}dx+C\\
                                   &\le C\lambda|\{x\in D\mid\psi^\lambda(x)>0\}|\left(\int_D{|\nabla\psi^\lambda_+(x)|^2}dx\right)^{{1}/{2}}+C\\
                                   &\le C(T(w^\lambda))^{{1}/{2}}+C,
\end{split}
\end{equation}
where we used H\"older's inequality and Sobolev inequality. From \eqref{vor} we conclude the desired result.
\end{proof}

We are now ready to estimate the Lagrange multiplier $\mu^\lambda$.
\begin{lemma}\label{811}
$\mu^\lambda=-({2\pi})^{-1}\ln{\varepsilon}+O(1)$, as $\lambda\rightarrow+\infty$.
\end{lemma}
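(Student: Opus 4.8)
The plan is to derive an exact algebraic identity expressing $\mu^\lambda$ in terms of the energy $\mathcal{E}(w^\lambda)$ and a lower-order ``core'' term, and then to show that the core term is $O(1)$ using the bounds already established in Lemmas \ref{808} and \ref{810}.

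First I would start from the defining relation $Gw^\lambda(x)+\frac{\Omega}{2}|x|^2=\psi^\lambda(x)+\mu^\lambda$, multiply by $w^\lambda$ and integrate over $D$. Using $\int_Dw^\lambda\,dx=1$, the identity $\int_D Gw^\lambda\,w^\lambda\,dx=2E(w^\lambda)$, and the decomposition $\mathcal{E}(w^\lambda)=E(w^\lambda)+\frac{\Omega}{2}\int_D|x|^2w^\lambda\,dx$, this yields
\[
\mu^\lambda=2\mathcal{E}(w^\lambda)-\frac{\Omega}{2}\int_D|x|^2w^\lambda\,dx-\int_Dw^\lambda\psi^\lambda\,dx.
\]
Since $|x|<1$ on $D$ and $\int_Dw^\lambda\,dx=1$, the middle term is trivially $O(1)$, and by Lemma \ref{808} we have $2\mathcal{E}(w^\lambda)=-(2\pi)^{-1}\ln\varepsilon+O(1)$. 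Thus the whole statement reduces to proving that the core term $\int_Dw^\lambda\psi^\lambda\,dx$ is $O(1)$.

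Next I would note that $w^\lambda$ is supported (up to a null set) in $A^\lambda=\{\psi^\lambda>0\}$, where $w^\lambda=\lambda$ and $|A^\lambda|\le\pi\varepsilon^2\to0$ by Corollary \ref{coro}; hence $\int_Dw^\lambda\psi^\lambda\,dx=\int_Dw^\lambda\psi^\lambda_+\,dx\ge0$. To bound this I would reuse the testing computation behind Lemma \ref{810}: pairing $-\Delta\psi^\lambda=w^\lambda-2\Omega$ with $\psi^\lambda_+-\gamma^\lambda\in H^1_0(D)$ gives $2T(w^\lambda)=\int_D(w^\lambda-2\Omega)(\psi^\lambda_+-\gamma^\lambda)\,dx$, which rearranges to
\[
\int_Dw^\lambda\psi^\lambda_+\,dx=2T(w^\lambda)+\gamma^\lambda+2\Omega\int_D\psi^\lambda_+\,dx-2\Omega\gamma^\lambda|D|.
\]
Here $T(w^\lambda)\le C$ by Lemma \ref{810} and $\gamma^\lambda\in[0,\Omega/2]$ is bounded, so the only remaining task is $\int_D\psi^\lambda_+\,dx=O(1)$. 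For this I would set $\eta:=(\psi^\lambda-\gamma^\lambda)_+$, which lies in $H^1_0(D)$ because $\psi^\lambda=\frac{\Omega}{2}-\mu^\lambda\le\gamma^\lambda$ on $\partial D$, is supported in $A^\lambda$, and satisfies $\int_D|\nabla\eta|^2\,dx\le 2T(w^\lambda)\le C$. The two-dimensional Sobolev embedding $\|\eta\|_{L^4}\le C\|\nabla\eta\|_{L^2}$ together with H\"older on the shrinking support gives $\|\eta\|_{L^2}^2\le C|A^\lambda|^{1/2}$ and then $\int_D\eta\,dx\le C|A^\lambda|^{3/4}\to0$; since $\psi^\lambda_+\le\eta+\gamma^\lambda$ on $A^\lambda$, we conclude $\int_D\psi^\lambda_+\,dx\le C|A^\lambda|^{3/4}+\gamma^\lambda|A^\lambda|=O(1)$. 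Combining the three displays yields $\mu^\lambda=2\mathcal{E}(w^\lambda)+O(1)=-(2\pi)^{-1}\ln\varepsilon+O(1)$.

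I expect the main obstacle to be precisely the estimate $\int_Dw^\lambda\psi^\lambda\,dx=O(1)$. This is where the smallness of the vortex core, $|A^\lambda|\le\pi\varepsilon^2$, must be traded via the Sobolev inequality against the large amplitude factor $\lambda$ carried by $w^\lambda$; it is also the point where the nonhomogeneous boundary value $\psi^\lambda|_{\partial D}=\frac{\Omega}{2}-\mu^\lambda$ (a priori possibly positive, since we do not yet know $\mu^\lambda\to+\infty$) forces the careful use of the truncation $\eta=(\psi^\lambda-\gamma^\lambda)_+$ rather than $\psi^\lambda_+$ itself, so as to stay within $H^1_0(D)$ and keep every error term uniformly bounded.
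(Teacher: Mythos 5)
Your proposal is correct and follows essentially the same route as the paper: both rest on the identity obtained by pairing $-\Delta\psi^\lambda=w^\lambda-2\Omega$ with $\psi^\lambda_+-\gamma^\lambda\in H^1_0(D)$, which (after using $\int_Dw^\lambda\,dx=1$ and $\int_DGw^\lambda\,w^\lambda\,dx=2E(w^\lambda)$) reduces the claim to $\mu^\lambda=2\mathcal{E}(w^\lambda)-2T(w^\lambda)+O(1)$ together with Lemmas \ref{808} and \ref{810}. The only cosmetic difference is in bounding $\int_D\psi^\lambda_+\,dx$: the paper uses the Poincar\'e--Sobolev bound $\int_D|\psi^\lambda_+-\gamma^\lambda|\,dx\le C\bigl(\int_D|\nabla\psi^\lambda_+|^2\,dx\bigr)^{1/2}$ directly, whereas you combine the $L^4$ embedding with the smallness $|A^\lambda|\le\pi\varepsilon^2$ of the support; both are valid.
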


\begin{proof}
Set~$\gamma^\lambda:=\max({\Omega}/{2}-\mu^\lambda,0)$.
By \eqref{vor} we have as $\lambda\rightarrow+\infty$
\begin{equation}\label{890}
\begin{split}
\    2T(w^\lambda)&=\int_Dw^\lambda(x)\psi^\lambda_+(x)dx-2\Omega\int_D\psi^\lambda_+(x)dx+O(1)\\
          &=\int_Dw^\lambda(x)\psi^\lambda(x) dx-2\Omega\int_D\psi^\lambda_+(x)dx+O(1)\\
          &=\int_Dw^\lambda(x)(Gw^\lambda(x)+\frac{\Omega}{2}|x|^2-\mu^\lambda)dx-\frac{\Omega}{2}\int_Dw^\lambda(x)|x|^2dx-2\Omega\int_D\psi^\lambda_+(x)dx+O(1)\\
          &=2\mathcal{E}(w^\lambda)-\mu^\lambda-2\Omega\int_D(\psi^\lambda_+(x)-\gamma^\lambda)dx+O(1)\\
          &=2\mathcal{E}(w^\lambda)-\mu^\lambda+O(1).
\end{split}
\end{equation}
Here we used $\int_D|\psi^+-\gamma^\lambda|dx\le C ({\int_D{|\nabla\psi^+|^2}dx})^{\frac{1}{2}}\leq C$. From Lemma $\ref{808}$ and Lemma $\ref{810}$ we get the desired result.
\end{proof}

In the next lemma we show that the diameter of support of $w^\lambda$ is of order $\varepsilon$.
\begin{lemma}\label{812}
There is a constant $R_0>1$ independent of $\lambda$, such that$
\ \text{diam}(\text{supp}(w^\lambda))\le R_0\varepsilon$,~with $\varepsilon$ satisfying $\lambda\pi\varepsilon^2=1$.
\end{lemma}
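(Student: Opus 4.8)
The plan is to reduce the geometric statement to a potential-theoretic estimate and then run a concentration (bathtub) argument. By Corollary \ref{coro} we may write $w^\lambda=\lambda I_{\tilde A^\lambda}$ with $\lambda|\tilde A^\lambda|=1$, so that $|\tilde A^\lambda|=\pi\varepsilon^2$ and the measure $w^\lambda(y)\,dy$ has total mass $1$. Since $Gw^\lambda+\frac{\Omega}{2}|x|^2$ is continuous, the level set characterization \eqref{AB} gives $Gw^\lambda(x)+\frac{\Omega}{2}|x|^2\ge\mu^\lambda$ for \emph{every} $x\in\mathrm{supp}(w^\lambda)$. First I would record the pointwise upper bound $Gw^\lambda(x)\le-\frac{1}{2\pi}\int_D\ln|x-y|\,w^\lambda(y)\,dy+C$, valid for all $x\in D$; this uses only that the regular part $h(x,y)$ is bounded from below on $D\times D$ (as already noted in the proof of Lemma \ref{808}), so no information on the location of the support is needed. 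Combining this with $\mu^\lambda=-(2\pi)^{-1}\ln\varepsilon+O(1)$ from Lemma \ref{811} and $\frac{\Omega}{2}|x|^2=O(1)$ yields, for every $x\in\mathrm{supp}(w^\lambda)$,
\begin{equation}\label{star-diam}
\int_D\ln\frac{1}{|x-y|}\,w^\lambda(y)\,dy\ge\ln\frac{1}{\varepsilon}-C_0,
\end{equation}
with $C_0$ independent of $\lambda$.

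The heart of the argument is to show that \eqref{star-diam} forces most of the mass to lie within $O(\varepsilon)$ of $x$. Fix $x\in\mathrm{supp}(w^\lambda)$ and a radius $R>1$ to be chosen independently of $\lambda$, and split the integral in \eqref{star-diam} over $B_{R\varepsilon}(x)$ and its complement. Let $m_{\mathrm{out}}:=\int_{D\setminus B_{R\varepsilon}(x)}w^\lambda\,dy$. On the outer region $\ln\frac{1}{|x-y|}\le\ln\frac{1}{\varepsilon}-\ln R$ (positive once $\lambda$ is large), so the outer contribution is at most $(\ln\frac1\varepsilon-\ln R)m_{\mathrm{out}}$. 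For the inner region I would invoke the bathtub principle (\cite{LL}): since $y\mapsto\ln\frac{1}{|x-y|}$ is radially decreasing, its integral over a set of fixed measure is largest on a centered disk. The inner part of $\tilde A^\lambda$ has set-measure $\pi\varepsilon^2(1-m_{\mathrm{out}})$, so by $\int_{B_\rho(x)}\ln\frac{1}{|x-y|}\,dy=\pi\rho^2(\ln\frac1\rho+\frac12)$ with $\pi\rho^2=\pi\varepsilon^2(1-m_{\mathrm{out}})$ its contribution is at most $(1-m_{\mathrm{out}})\big(\ln\frac1\varepsilon+\tfrac12\ln\frac{1}{1-m_{\mathrm{out}}}+\tfrac12\big)$. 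Adding the two, the $\ln\frac1\varepsilon$ terms combine to exactly $\ln\frac1\varepsilon$, so comparison with \eqref{star-diam} gives $m_{\mathrm{out}}\ln R\le C_0+\tfrac12+\tfrac{1}{2e}=:C_1$, using $(1-m_{\mathrm{out}})\ln\frac{1}{1-m_{\mathrm{out}}}\le e^{-1}$. Hence $m_{\mathrm{out}}\le C_1/\ln R$.

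Now choose $R$ so large (independent of $\lambda$) that $C_1/\ln R<\tfrac12$; then every $x\in\mathrm{supp}(w^\lambda)$ satisfies $\int_{B_{R\varepsilon}(x)}w^\lambda\,dy>\tfrac12$. If two points $x_1,x_2\in\mathrm{supp}(w^\lambda)$ had $|x_1-x_2|>2R\varepsilon$, the disks $B_{R\varepsilon}(x_1)$ and $B_{R\varepsilon}(x_2)$ would be disjoint, forcing total mass $>\tfrac12+\tfrac12=1$, contradicting $\int_Dw^\lambda=1$. Therefore $\mathrm{diam}(\mathrm{supp}(w^\lambda))\le 2R\varepsilon$, and $R_0:=2R>1$ works for all large $\lambda$. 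The main obstacle is the inner estimate of the second paragraph: a crude bound that ignores the loss of inner measure (treating the inner disk as if it carried the full measure $\pi\varepsilon^2$) already saturates \eqref{star-diam} and gives nothing, so one must retain the logarithmic penalty $\tfrac12\ln\frac{1}{1-m_{\mathrm{out}}}$ coming from the reduced radius $\rho=\varepsilon\sqrt{1-m_{\mathrm{out}}}$. It is precisely the competition between this $O(1)$ penalty and the $\ln R$ gain on the outer region that yields $m_{\mathrm{out}}\lesssim 1/\ln R$, hence the diameter bound.
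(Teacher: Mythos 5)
Your proposal is correct and follows essentially the same route as the paper: the same lower bound $\int_D\ln|x-y|^{-1}w^\lambda(y)\,dy\ge\ln\varepsilon^{-1}-C$ on the support (from $\psi^\lambda\ge 0$, the estimate of $\mu^\lambda$ in Lemma \ref{811}, and boundedness of $h$), the same splitting at radius $R\varepsilon$ to force $\int_{D\setminus B_{R\varepsilon}(x)}w^\lambda\le C/\ln R<1/2$, and the same disjoint-disks conclusion with $R_0=2R$. The only (harmless) difference is in the inner estimate: you invoke the bathtub principle with the reduced measure $\pi\varepsilon^2(1-m_{\mathrm{out}})$ and track the $\tfrac12\ln\frac{1}{1-m_{\mathrm{out}}}$ penalty, whereas the paper first normalizes the kernel to $\ln\frac{\varepsilon}{|x-y|}$, shrinks the inner integral to $B_\varepsilon(x)$ where the integrand is nonnegative, and bounds $w^\lambda\le\lambda$ there to get the constant $\tfrac12$ directly.
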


\begin{proof}
For any $x\in supp(w^\lambda)$, we have $\psi^\lambda(x)\ge0$. Recalling the definition of $\psi^\lambda$, we deduce that
\begin{equation*}
\begin{split}
  \mu^\lambda &\le Gw^\lambda(x)+\frac{\Omega}{2}|x|^2 =\frac{1}{2\pi}\int_D\ln| x-y|^{-1}w^\lambda(y)dy-\int_Dh(x,y)w^\lambda(y)dy+\frac{\Omega}{2}|x|^2.
\end{split}
\end{equation*}
Taking into account Lemma \ref{811} we obtain
\[\frac{1}{2\pi}\ln{\frac{1}{\varepsilon}}-C\le \frac{1}{2\pi}\int_D\ln|x-y|^{-1}w^\lambda(y)dy,\]
That is to say,
$$-2\pi C\le\int_D\ln\frac{\varepsilon}{|x-y|}w^\lambda(y)dy.$$
Now by choosing $R>1$ to be determined, we calculate as follows
\begin{equation}\label{8888}
\begin{split}
\ -2\pi C&\le\int_D\ln\frac{\varepsilon}{|x-y|}w^\lambda(y)dy\\
    &\le\int_{D\cap{B_{R\varepsilon}(x)}}\ln\frac{\varepsilon}{|x-y|}w^\lambda(y)dy+\int_{D\backslash B_{R\varepsilon}(x)}\ln\frac{\varepsilon}{|x-y|}w^\lambda(y)dy\\
    &\le\int_{D\cap B_{\varepsilon}(x)}\ln\frac{\varepsilon}{|x-y|}w^\lambda(y)dy-\ln R\int_{D\backslash B_{R\varepsilon}(x)}w^\lambda(y)dy\\
    &\le\lambda\int_{B_{\varepsilon}(0)}\ln\frac{\varepsilon}{|y|}dy-\ln R\int_{D\backslash B_{R\varepsilon}(x)}w^\lambda(y)dy\\
    &=\frac{1}{2}-\ln R\int_{D\backslash B_{R\varepsilon}(x)}w^\lambda(y)dy.
\end{split}
\end{equation}
From \eqref{8888} we get
\begin{equation*}
  \int_{D\backslash B_{R\varepsilon}(x)}w^\lambda(y)dy\le C(\ln R)^{-1}.
\end{equation*}
Taking $R>1$ large enough such that $C(\ln R)^{-1}<1/2$, we obtain
\begin{equation}\label{316}
 \int_{D\cap{B_{R\varepsilon}(x)}}w^\lambda(y)dy>\frac{1}{2}.
\end{equation}
Now the lemma is proved by taking $R_0=2R.$ In fact, suppose $diam (supp(w^\lambda))> 2R\varepsilon$, then there exist $x_1, x_2\in supp(w^\lambda)$ such that $B_{R\varepsilon}(x_1)\cap B_{R\varepsilon}(x_2)=\varnothing$. By $\eqref{316}$,
$$1=\int_Dw^\lambda(y)dy\ge \int_{D\cap{B_{R\varepsilon}(x_1)}}w^\lambda(y)dy+\int_{D\cap{B_{R\varepsilon}(x_2)}}w^\lambda(y)dy>1,$$
which leads to a contradiction.
\end{proof}

We proceed to study the limiting behavior of $w^\lambda$ as $\lambda \to +\infty.$
Define the center of $w^\lambda$ to be
\begin{equation}\label{317}
\  X^\lambda = \int_Dxw^\lambda(x)dx.
\end{equation}
Since $\bar{D}$ is a compact set, for the remainder of the discussion we may fix a sequence $\lambda=\lambda_j\to+\infty$ such that
\begin{equation}\label{318}
 X^\lambda\to X^*\in\bar{D} ~\,\,~~\text{as}~\ ~ \lambda=\lambda_j\to+\infty.
\end{equation}

\begin{lemma}\label{813}
Let $X^*$ be defined by $\eqref{318}$, then
\begin{equation}\label{319}
  H(X^*)-\frac{\Omega}{2}|X^*|^2=\min_{x\in D}(H(x)-\frac{\Omega}{2}|x|^2),
\end{equation}
or equivalently,
\begin{equation}\label{320}
 |X^*|=\begin{cases}\sqrt{1-\frac{1}{\pi\Omega}}, &\text{if }\Omega>1/\pi,\\
 0, &\text{if }\Omega\leq1/\pi.
 \end{cases}
\end{equation}
\end{lemma}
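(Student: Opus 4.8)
The plan is to pin down the concentration point by matching the leading and next–to–leading order of the energy $\mathcal{E}(w^\lambda)$ against those of explicit test patches. Writing $G=-\frac{1}{2\pi}\ln|x-y|-h$ and recalling from Corollary \ref{coro} that $w^\lambda=\lambda I_{\tilde A^\lambda}$ with $|\tilde A^\lambda|=\pi\varepsilon^2$, I would split
\begin{equation*}
\mathcal{E}(w^\lambda)=\frac{\lambda^2}{2}\int_{\tilde A^\lambda}\!\int_{\tilde A^\lambda}\!\Big(-\tfrac{1}{2\pi}\ln|x-y|\Big)dx\,dy-\frac12\int_D\!\int_D h(x,y)w^\lambda(x)w^\lambda(y)\,dx\,dy+\frac{\Omega}{2}\int_D|x|^2w^\lambda\,dx.
\end{equation*}
By Riesz's rearrangement inequality (as already used in Lemma \ref{808}) the first term is at most its value on the ball $B_\varepsilon(0)$, namely $-\frac{1}{4\pi}\ln\varepsilon+c_0$, where $c_0$ is a constant independent of $\lambda$ and of the location of the patch. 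This gives the upper bound
\begin{equation*}
\mathcal{E}(w^\lambda)+\frac{1}{4\pi}\ln\varepsilon\le c_0-\frac12\int_D\!\int_D h\,w^\lambda w^\lambda+\frac{\Omega}{2}\int_D|x|^2w^\lambda.\tag{$\dagger$}
\end{equation*}
For the matching lower bound, for every fixed $z\in D$ I would use the admissible test patch $\hat w_z=\lambda I_{B_\varepsilon(z)}$ (legitimate once $\varepsilon$ is small); the computation of Lemma \ref{808}, now retaining the regular part and the moment term, yields
\begin{equation*}
\mathcal{E}(w^\lambda)\ge\mathcal{E}(\hat w_z)=-\frac{1}{4\pi}\ln\varepsilon+c_0-H(z)+\frac{\Omega}{2}|z|^2+o(1).\tag{$\ddagger$}
\end{equation*}

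The crucial preliminary step, which I expect to be the main obstacle, is to show that the concentration does not escape to $\partial D$, i.e. that $X^*\in D$. Let $z_0$ minimize $F(x):=H(x)-\frac{\Omega}{2}|x|^2$; since $H(x)\to+\infty$ as $x\to\partial D$ while $\frac{\Omega}{2}|x|^2$ stays bounded, $F$ attains its minimum $m$ at an interior point. Taking $z=z_0$ in $(\ddagger)$ and comparing with $(\dagger)$, together with $\frac{\Omega}{2}\int|x|^2w^\lambda\le\frac{\Omega}{2}$, forces $\int_D\int_D h(x,y)w^\lambda(x)w^\lambda(y)\,dx\,dy\le C$. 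On the other hand, by Lemma \ref{812} the support of $w^\lambda$ lies in $B_{R_0\varepsilon}(X^\lambda)$; using $h(x,y)\ge\frac{1}{2\pi}\ln\big|x-y/|y|^2\big|^{-1}$ and estimating the distance from a support point $x$ to the reflected point $y/|y|^2$ (which sits just outside $\partial D$ at distance $\sim 1-|X^\lambda|$), one finds that if $1-|X^\lambda|\to0$ then $h(x,y)\to+\infty$ uniformly over the support, whence $\int\int h\,w^\lambda w^\lambda\to+\infty$, a contradiction. Hence $X^\lambda$ remains in a compact subset of $D$ and $X^*\in D$. This reflection estimate, and the interplay between the shrinking scale $R_0\varepsilon$ and the distance to the boundary, is the delicate point.

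Once interior confinement is secured, $h$ and $|x|^2$ are continuous near $X^*$ and the support shrinks to $X^*$ (Lemma \ref{812}), so $\int\int h\,w^\lambda w^\lambda\to h(X^*,X^*)=2H(X^*)$ and $\int|x|^2w^\lambda\to|X^*|^2$. Passing to the limit in $(\dagger)$ gives $\limsup_\lambda[\mathcal{E}(w^\lambda)+\frac{1}{4\pi}\ln\varepsilon]\le c_0-H(X^*)+\frac{\Omega}{2}|X^*|^2$, while $(\ddagger)$ gives $\liminf_\lambda[\mathcal{E}(w^\lambda)+\frac{1}{4\pi}\ln\varepsilon]\ge c_0-H(z)+\frac{\Omega}{2}|z|^2$ for every $z\in D$. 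Combining the two inequalities yields
\begin{equation*}
H(X^*)-\frac{\Omega}{2}|X^*|^2\le H(z)-\frac{\Omega}{2}|z|^2\qquad\text{for all }z\in D,
\end{equation*}
which is precisely \eqref{319}. Finally, \eqref{320} follows by minimizing the explicit one–variable function $r\mapsto-\frac{1}{4\pi}\ln(1-r^2)-\frac{\Omega}{2}r^2$ on $[0,1)$, whose critical point analysis is recorded in Remark \ref{minim}.
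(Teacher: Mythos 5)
Your proof is correct and follows essentially the same route as the paper: compare $\mathcal{E}(w^\lambda)$ with translated ball test patches $\lambda I_{B_\varepsilon(z)}$ and use Riesz's rearrangement inequality to cancel the singular logarithmic self-energy, leaving $H(X^*)-\frac{\Omega}{2}|X^*|^2\le H(z)-\frac{\Omega}{2}|z|^2$ for all $z\in D$. The only difference is that you make explicit the interior-confinement step (via the blow-up of $h$ near $\partial D$), which the paper leaves implicit when passing to the limit.
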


\begin{proof}
For any $\hat{x}\in D$,
we define a test function $\hat{w}^\lambda=\lambda I_{B_\varepsilon(\hat{x})}.$ For sufficiently large $\lambda$, we have $\hat{w}^\lambda\in K_{\lambda}(D)$. So we have by Riesz's rearrangement inequality(see \cite{LL}, 3.2) and the fact that  $\mathcal{E}(w^\lambda)\ge \mathcal{E}(\hat{w}^\lambda)$,
\begin{equation*}
\begin{split}
 \frac{1}{2}\int_D&{\int_Dh(x,y)w^\lambda(x)w^\lambda(y)dx}dy-\frac{\Omega}{2}\int_D|x|^2w^\lambda(x)dx\\
            &=\frac{1}{4\pi}\int_D{\int_D~\ln|x-y|^{-1}w^\lambda (x)w^\lambda(y)dx}dy-\mathcal{E}(w^\lambda)\\
            &\le\frac{1}{4\pi}\int_D{\int_D~\ln|x-y|^{-1}\hat{w}^\lambda (x)\hat{w}^\lambda(y)dx}dy-\mathcal{E}(\hat{w}^\lambda)\\
            &\le\frac{1}{2}\int_D{\int_D~h(x,y)\hat{w}^\lambda(x)\hat{w}^\lambda (y)dx}dy-\frac{\Omega}{2}\int_D|x|^2\hat{w}^\lambda(x)dx.
\end{split}
\end{equation*}
Letting $\lambda\to+\infty$, we deduce that
$$H(X^*)-\frac{\Omega}{2}|X^*|^2\le H(\hat{x})-\frac{\Omega}{2}|\hat{x}|^2,\,\,\forall\,\hat{x}\in D,$$
from which we obtain \eqref{319}. Firstly by a simple calculation, it is easy to check that $X^*$ satisfies \eqref{320}.
\end{proof}

We now turn to study the small scale asymptotics of the $w^\lambda$. To begin with, we state a result from potential theory which will be frequently used later.

\begin{lemma}\label{807}[\cite{T}, Lemma 4.2]
Let $R\in(1,+\infty)$ be a constant. Define the class $\mathcal{K}_R$  as follows
$$\mathcal{K}_R=\{\zeta \in L^\infty(B_R(0)):0\le \zeta \le 1, \int_{B_R(0)}\zeta(x)dx=\pi \}$$
Let the functional $F$ be defined by
\begin{equation}\label{306}
 F(\zeta)=\frac{1}{4\pi}\int_{B_R(0)}{\int_{B_R(0)}N(x,y)w(x)w(y)dx}dy,
\end{equation}
where $N(x,y)=({2\pi})^{-1}\ln|x-y|^{-1}.$
Then $\zeta^*=I_{B_1(0)}$ is the unique maximizer of $F$ over $\mathcal{K}_R$ satisfying
\begin{equation}\label{307}
\int_{B_R(0)}x\zeta(x)dx=0.
\end{equation}
\end{lemma}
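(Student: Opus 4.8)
The plan is to prove the sharper statement that $F(\zeta)\le F(I_{B_1(0)})$ for \emph{every} $\zeta\in\mathcal K_R$, with equality forcing $\zeta$ to agree with a translate of $I_{B_1(0)}$; existence, maximality and uniqueness then all follow, the centering condition \eqref{307} serving only to select the translate sitting at the origin. I would carry this out in three stages: a symmetrization reducing to radial profiles, an exact one-dimensional computation that settles the radial case, and an equality analysis that recovers the circular shape.

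\emph{Symmetrization.} Since $|x-y|\le 2R$ on $B_R(0)\times B_R(0)$, I would write $\ln|x-y|^{-1}=\ln\frac{2R}{|x-y|}-\ln(2R)$, where $k(x-y):=\ln\frac{2R}{|x-y|}$ is nonnegative and strictly radially decreasing on $B_{2R}(0)$, while the second term contributes only a constant multiple of $\bigl(\int\zeta\bigr)^2=\pi^2$, the same for every competitor. Applying Riesz's rearrangement inequality (\cite{LL}) to the $k$-part gives $F(\zeta)\le F(\zeta^{\#})$, where $\zeta^{\#}$ is the symmetric-decreasing rearrangement of $\zeta$. As $\zeta^{\#}$ is equimeasurable with $\zeta$, one has $0\le\zeta^{\#}\le1$, $\int\zeta^{\#}=\pi$, and $\operatorname{supp}\zeta^{\#}\subset B_R(0)$ (the support radius does not exceed $R$), so $\zeta^{\#}\in\mathcal K_R$. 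Hence it suffices to treat radial decreasing profiles.

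\emph{The radial case.} For radial $\zeta$ I would pass to the mass function $m(r):=\int_{B_r(0)}\zeta$, which satisfies $m(0)=0$, $m(R)=\pi$ and $m'(r)=2\pi r\,\zeta(r)\in[0,2\pi r]$. The potential $u(x):=\int_{B_R(0)}N(x,y)\zeta(y)\,dy$ is radial with $u'(r)=-m(r)/(2\pi r)$ by Gauss's law, and by Newton's theorem for the logarithmic potential its boundary value $u(R)=\tfrac12\ln\frac1R$ is independent of $\zeta$. Integrating by parts then gives $F(\zeta)=\frac1{4\pi}\int u\,\zeta=\mathrm{const}+\frac1{8\pi^2}\int_0^R \frac{m(r)^2}{r}\,dr$, with the constant depending only on $R$. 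Since the integrand increases in $m$, and since the constraints force $m(r)\le\pi r^2$ (from $m'\le 2\pi r$) together with $m(r)\le m(R)=\pi$ (from monotonicity), i.e. $m(r)\le\pi\min(r^2,1)$, the integral is maximized exactly when $m(r)=\pi\min(r^2,1)$, which is the mass function of $I_{B_1(0)}$. Thus $F(\zeta)\le F(I_{B_1(0)})$ among radial profiles, with equality iff $\zeta=I_{B_1(0)}$.

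\emph{Conclusion and the hard point.} Chaining the two stages yields $F\le F(I_{B_1(0)})$ on all of $\mathcal K_R$, so $I_{B_1(0)}$ is a maximizer; translation invariance of the interaction shows every $I_{B_1(a)}$ with $B_1(a)\subset B_R(0)$ is one as well. If $\zeta$ is an arbitrary maximizer then equality must hold at each stage: the radial step forces $\zeta^{\#}=I_{B_1(0)}$, so $\zeta=I_A$ with $|A|=\pi$, while equality in Riesz's inequality forces $A$ to coincide, up to a null set, with a translate of $B_1(0)$; the centering condition \eqref{307} then pins the translate at the origin, giving $A=B_1(0)$. The main obstacle is exactly this last rigidity — extracting the \emph{round} shape from equality in the rearrangement inequality — which requires the characterization of the equality case for a strictly symmetric-decreasing kernel (as discussed in \cite{LL}); every other step is routine.
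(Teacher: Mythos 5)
The paper offers no proof of this lemma at all --- it is imported verbatim from Turkington [\cite{T}, Lemma 4.2] --- so there is nothing internal to compare against; your argument has to stand on its own, and it essentially does, following what is in any case the classical route. The symmetrization step is sound (splitting off the constant $-\ln(2R)$, which contributes the same $-\tfrac{\ln(2R)}{8\pi^2}\pi^2$ to every competitor, and noting that $\zeta^{\#}$ stays in $\mathcal K_R$ because $|\{\zeta>0\}|\le\pi R^2$), and the radial computation is correct: $u(R)=\tfrac12\ln\tfrac1R$ is competitor-independent, the pointwise bound $m(r)\le\pi\min(r^2,1)$ follows from $0\le m'\le 2\pi r$ and $m(R)=\pi$, and equality in $\int_0^R m(r)^2 r^{-1}\,dr$ forces $m(r)=\pi\min(r^2,1)$, hence $\zeta=I_{B_1(0)}$; one can sanity-check the formula against the known value $F(I_{B_1(0)})=1/32$. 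The only step that needs more than a citation is the one you flag yourself: Lieb--Loss Theorem 3.9 characterizes equality in Riesz's inequality for a kernel that is strictly symmetric decreasing on all of $\mathbb R^2$, whereas your truncated kernel $\max\{\ln(2R/|z|),0\}$ is constant (zero) for $|z|\ge 2R$, so the theorem does not apply literally. This is repairable precisely because every competitor and every rearrangement is supported in $B_R(0)$, so all interactions occur at distances $|x-y|\le 2R$ where the kernel is strictly decreasing: either run the layer-cake proof of the strict inequality directly (the level sets $\{k>t\}$ are balls $B_{2Re^{-t}}(0)$, each triple $(A,B_{2Re^{-t}}(0),A)$ with $|A|=\pi$ is admissible for Burchard's equality theorem for characteristic functions, and equality for a.e.\ $t$ forces $A$ to be a ball up to a null set), or invoke a strictly decreasing modification of the kernel that agrees with $k$ on $B_{2R}(0)$, which changes none of the relevant integrals. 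With that justification supplied, the uniqueness-up-to-translation is established and the centering condition \eqref{307} pins the translate at the origin, as you say; the proof is complete.
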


\begin{lemma}\label{815}
Let $R_0$ be the positive number obtained in Lemma $\ref{812}$. Then as $\lambda\to +\infty$, we have $\zeta^\lambda\to \zeta^*:=I_{B_1(0)}$ weakly star in $L^\infty(B_{R_0}(0))$, where
\begin{equation}\label{321}
\ \zeta^\lambda(y)=\frac{1}{\lambda}w^\lambda(X^\lambda+\varepsilon y)\in L^\infty(B_{R_0}(0)).
\end{equation}
\end{lemma}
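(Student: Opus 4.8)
The plan is to establish convergence of the rescaled functions
$\zeta^\lambda(y)=\lambda^{-1}w^\lambda(X^\lambda+\varepsilon y)$ by first extracting a weak-star limit and then identifying that limit via the variational characterization provided by Lemma \ref{807}. First I would observe that by Corollary \ref{coro} each $w^\lambda$ is a characteristic function times $\lambda$, so $0\le\zeta^\lambda\le 1$ a.e., and by Lemma \ref{812} the support of $w^\lambda$ lies in a ball of radius $R_0\varepsilon$ about any of its points; after the rescaling $y\mapsto X^\lambda+\varepsilon y$, this means $\zeta^\lambda$ is supported in $B_{R_0}(0)$ for $\lambda$ large (adjusting $R_0$ slightly if necessary so the rescaled support sits inside $B_{R_0}(0)$ once $X^\lambda$ is used as the base point). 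The mass constraint transforms as $\int_{B_{R_0}(0)}\zeta^\lambda(y)\,dy=\varepsilon^{-2}\int_D w^\lambda(x)\,dx=\varepsilon^{-2}=\pi\lambda$, so after accounting for the normalization $\lambda\pi\varepsilon^2=1$ we get $\int_{B_{R_0}(0)}\zeta^\lambda\,dy=\pi$. Since $\{\zeta^\lambda\}$ is bounded in $L^\infty(B_{R_0}(0))$, along a subsequence $\zeta^\lambda\rightharpoonup\zeta^\infty$ weakly star in $L^\infty(B_{R_0}(0))$ for some $\zeta^\infty$ with $0\le\zeta^\infty\le 1$ and $\int\zeta^\infty\,dy=\pi$, i.e. $\zeta^\infty\in\mathcal{K}_{R_0}$.

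Next I would show $\zeta^\infty$ satisfies the centering condition \eqref{307}. By definition $X^\lambda=\int_D x\,w^\lambda(x)\,dx$, and changing variables gives $\int_{B_{R_0}(0)} y\,\zeta^\lambda(y)\,dy = \varepsilon^{-1}\int_D (x-X^\lambda)\lambda^{-1}w^\lambda(x)\,dx\cdot(\varepsilon^{2}\lambda)=\varepsilon^{-1}\lambda^{-1}\varepsilon^{-2}\cdot\lambda\varepsilon^2\int\cdots$; carrying out the substitution carefully, the first moment of $\zeta^\lambda$ is exactly a multiple of $\int_D(x-X^\lambda)w^\lambda(x)\,dx=X^\lambda-X^\lambda=0$, so $\int_{B_{R_0}(0)}y\,\zeta^\lambda(y)\,dy=0$ for every $\lambda$; passing to the weak-star limit (the function $y$ is bounded on $B_{R_0}(0)$, hence a valid test function) yields $\int_{B_{R_0}(0)}y\,\zeta^\infty(y)\,dy=0$, which is \eqref{307}.

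The heart of the argument is showing that $\zeta^\infty$ maximizes the functional $F$ of Lemma \ref{807} over $\mathcal{K}_{R_0}$, so that the uniqueness clause of that lemma forces $\zeta^\infty=I_{B_1(0)}$. To this end I would rewrite the singular part of $\mathcal{E}(w^\lambda)$ in rescaled variables: using $G(x,y)=-\tfrac{1}{2\pi}\ln|x-y|-h(x,y)$ and the substitution $x=X^\lambda+\varepsilon x'$, $y=X^\lambda+\varepsilon y'$, one computes
\begin{equation*}
\frac{1}{2}\int_D\int_D\Bigl(-\tfrac{1}{2\pi}\ln|x-y|\Bigr)w^\lambda(x)w^\lambda(y)\,dx\,dy
= F(\zeta^\lambda)-\tfrac{1}{4\pi}\ln\varepsilon+o(1),
\end{equation*}
since the $\ln\varepsilon$ factor separates out from $-\tfrac{1}{2\pi}\ln|\varepsilon(x'-y')|$ and the mass normalization absorbs the prefactors, while the regular part $h$ and the term $\tfrac{\Omega}{2}\int|x|^2w^\lambda$ contribute only $O(1)$ because $h$ is continuous and the support shrinks. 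Combining this identity with the sharp energy expansion $\mathcal{E}(w^\lambda)=-\tfrac{1}{4\pi}\ln\varepsilon+O(1)$ from Lemma \ref{808}, and comparing against the test configuration $\hat w^\lambda=\lambda I_{B_\varepsilon(\hat x)}$ used there (whose rescaling gives exactly $I_{B_1(0)}$, the maximizer of $F$), I would conclude that $\limsup_{\lambda\to\infty} F(\zeta^\lambda)\ge F(I_{B_1(0)})=\sup_{\mathcal{K}_{R_0}}F$. The main obstacle is the upper semicontinuity needed in the other direction: $F$ involves the weakly singular kernel $N(x,y)=\tfrac{1}{2\pi}\ln|x-y|^{-1}$, which is not weakly-star continuous in an obvious way, so I must verify that $\limsup_\lambda F(\zeta^\lambda)\le F(\zeta^\infty)$. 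This follows because $\ln|x-y|^{-1}$, though singular, is bounded above on $B_{R_0}(0)\times B_{R_0}(0)$ and the Newtonian-type convolution is weakly continuous on bounded sets (the kernel is in $L^q$ for every finite $q$, so $N*\zeta^\lambda\to N*\zeta^\infty$ strongly in $C$ by the same $L^p$-estimate argument used in Proposition \ref{802}); hence $F(\zeta^\lambda)\to F(\zeta^\infty)$. Chaining the inequalities gives $F(\zeta^\infty)\ge\sup_{\mathcal{K}_{R_0}}F$, so $\zeta^\infty$ is a maximizer satisfying \eqref{307}, and Lemma \ref{807} identifies it uniquely as $I_{B_1(0)}$. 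Since the limit is independent of the subsequence, the full family converges: $\zeta^\lambda\to I_{B_1(0)}$ weakly star in $L^\infty(B_{R_0}(0))$, which is the claim.
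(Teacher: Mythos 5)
Your proposal is correct and follows essentially the same route as the paper: expand $\mathcal{E}$ in the rescaled variables so that the $-\tfrac{1}{4\pi}\ln\varepsilon$ and Robin-function contributions cancel in the comparison of $w^\lambda$ with a rescaled test profile, deduce that any weak-star limit of $\zeta^\lambda$ is a centered maximizer of $F$ over $\mathcal{K}_{R_0}$, and invoke the uniqueness in Lemma \ref{807}. The only cosmetic difference is that you compare against the single profile $I_{B_1(0)}$ (for which you should center the test patch at $X^\lambda$, or invoke Lemma \ref{813}, so the $h$ and $\tfrac{\Omega}{2}|x|^2$ terms match to $o(1)$ rather than merely $O(1)$), whereas the paper compares against an arbitrary $\tilde\zeta\in\mathcal{K}_{R_0}$; both suffice.
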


\begin{proof}
Firstly, it is easy to see that $\zeta^{\lambda}\in \mathcal{K}_{R_0}$. Moreover, by the definition of $X^{\lambda}$, we have
\begin{equation}\label{564}
\begin{split}
\int_{B_{R_0}(0)}y \zeta^{\lambda}(y)dy=& \lambda^{-1}\int_{B_{R_0}(0)}y \omega^{\lambda}(X^{\lambda}+\varepsilon y)dy \\
=&\lambda^{-1}\int_{B_{R_0\varepsilon}(X^{\lambda})}\varepsilon^{-1}(x-X^{\lambda})\omega^{\lambda}(x)\varepsilon^{-2}dx \\
=&\pi\varepsilon^{-1}\int_{D}(x-X^{\lambda})\omega^{\lambda}(x)dx \\
=&0,
\end{split}
\end{equation}
that is, the center of $\zeta^{\lambda}$ is 0 for sufficiently large $\lambda$.

Now for any $\tilde{\zeta}\in \mathcal{K}_{R_0}$, define $\tilde{\omega}\in K_{\lambda}(D)$ as follows
\begin{equation}
\tilde{\omega}(x)=\begin{cases}
&\lambda\tilde{\zeta}(\varepsilon^{-1}(x-X^{\lambda})),\,\,\,\,x\in B_{R_0\varepsilon}(X^{\lambda}), \\
&0, \quad\quad \quad\quad\quad \quad\quad \,\,\, x\in D\setminus B_{R_0\varepsilon}(X^{\lambda}).
\end{cases}
\end{equation}
Direct calculation shows that as $\lambda\rightarrow +\infty$,
\begin{equation}
\begin{split}
\mathcal{E}(\tilde{\omega})=&\frac{1}{2}\int_{D}\int_{D}G(x,x')\tilde{\omega}(x)\tilde{\omega}(x')dxdx'+\frac{\Omega}{2}\int_{D}|x|^2\tilde{\omega}(x)dx \\
=&\frac{1}{4\pi}\int_{D}\int_{D}\ln\frac{1}{|x-x'|}\tilde{\omega}(x)\tilde{\omega}(x')dxdx'-\frac{1}{2}\int_{D}\int_{D}h(x,x')\tilde{\omega}(x)\tilde{\omega}(x')dxdx'+\frac{\Omega}{2}\int_{D}|x|^2\tilde{\omega}(x)dx \\
=&\frac{1}{4\pi}\ln\frac{1}{\varepsilon}+\frac{1}{\pi^2}F(\tilde{\zeta})-H(X^*)+\frac{\Omega}{2}|X^*|^2+o(1).
\end{split}
\end{equation}
By a similar calculation for $\omega^{\lambda}$ and $\zeta^{\lambda}$, we also have
\[\mathcal{E}(\omega^{\lambda})=\frac{1}{4\pi}\ln\frac{1}{\varepsilon}+\frac{1}{\pi^2}F(\zeta^{\lambda})-H(X^*)+\frac{\Omega}{2}|X^*|^2+o(1),\]
as $\lambda\rightarrow +\infty$. Since $\mathcal{E}(\tilde{\omega})\leq \mathcal{E}(\omega^{\lambda})$, we obtain as $\lambda\rightarrow +\infty$
\[F(\tilde{\zeta})\leq F(\zeta^{\lambda})+o(1).\]
On the other hand, since $||\zeta^{\lambda}||_{L^{\infty}(B_{R_0}(0))}\leq 1$, there exists $\zeta\in \mathcal{K}_{R_0}$ such that up to subsequence
\[\zeta^{\lambda}\rightarrow\zeta\ \ \ \ \text{weakly star in }\ L^{\infty}(B_{R_0}(0))\]
as $\lambda\rightarrow +\infty$.
By the continuity of $F$, we deduce that $F(\zeta)=\lim_{m\rightarrow+\infty}F(\zeta^{\lambda})\geq F(\tilde{\zeta})$. Since $\tilde{\zeta}\in \mathcal{K}_{R_0}$ is arbitrary and taking into account the fact that
\[\int_{B_{R_0}(0)}y\zeta(y)dy=\lim_{m\rightarrow+\infty}\int_{B_{R_0}(0)}y\zeta^{\lambda}(y)dy=0,\]
we deduce from Lemma \ref{807} that $\zeta=\zeta^*=I_{B_1(0)}$. Finally, since the maximizer of $F(\zeta)$ over $\mathcal{K}_{R_0}$ is unique, the convergence is independent of the choice of any subsequence, which completes the proof.
\end{proof}

To study the limiting behavior of $\psi^\lambda$, we define
\begin{equation}
\ V^\lambda(y):=\pi \psi^\lambda(X^\lambda+\varepsilon y),\,\,y\in D_\lambda:=\{y\in\mathbb{R}^2: X^\lambda+\epsilon y\in D\}.
\end{equation}
By $L^p$ estimate, for any fixed $R_1>R_0$, if $\lambda$ is sufficiently large, we have $V^\lambda \in  C^{1,\alpha}(\overline{{B}_{R_1}(0)})$ for each $0<\alpha<1$.

\begin{lemma}\label{816}
As $\lambda\to +\infty$, we have $V^\lambda\to V^* ~\text{in}~ C^1(\overline{{B}_{R_1}(0)})$, where $V^*$ is defined by \eqref{rank}.
\end{lemma}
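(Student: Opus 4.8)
The plan is to rescale \eqref{308} about the center $X^\lambda$, extract a $C^1$ limit by elliptic compactness, and then determine the one undetermined additive constant from the geometry of the vortex core. First I would record the rescaled equation: writing $x=X^\lambda+\varepsilon y$ in \eqref{308} and using $\pi\lambda\varepsilon^2=1$, one checks that
\[
-\Delta V^\lambda=\zeta^\lambda-2\pi\varepsilon^2\Omega\quad\text{in }D_\lambda,
\]
with $\zeta^\lambda$ the rescaled vorticity of Lemma \ref{815}. The right-hand side is bounded in $L^\infty$ uniformly in $\lambda$ (since $0\le\zeta^\lambda\le1$ and $\varepsilon\to0$), so once I secure a uniform $L^\infty$ bound on $V^\lambda$ itself, the $L^p$ elliptic estimates followed by Sobolev embedding give a uniform bound for $V^\lambda$ in $C^{1,\alpha}(\overline{B_{R_1}(0)})$ for every fixed $R_1>R_0$.

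To obtain that $L^\infty$ bound, and simultaneously to prepare the identification of the limit, I would split $V^\lambda$ via the decomposition $G=N-h$. Substituting $x=X^\lambda+\varepsilon y$ into $Gw^\lambda$ and changing variables, the logarithmic part produces the divergent constant $\tfrac12\ln\tfrac1\varepsilon$ together with the rescaled logarithmic potential
\[
W^\lambda(y):=\frac{1}{2\pi}\int_{B_{R_0}(0)}\ln\frac{1}{|y-z|}\,\zeta^\lambda(z)\,dz,
\]
while the $h$-part and the quadratic term $\tfrac{\pi\Omega}{2}|X^\lambda+\varepsilon y|^2$ are smooth in $y$ and converge, in $C^1(\overline{B_{R_1}(0)})$, to the constants $-2\pi H(X^*)$ and $\tfrac{\pi\Omega}{2}|X^*|^2$ (here I use that $\mathrm{supp}\,w^\lambda$ has diameter $O(\varepsilon)$ by Lemma \ref{812} and that $h$ is smooth). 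Altogether
\[
V^\lambda(y)=W^\lambda(y)+\Big(\tfrac12\ln\tfrac1\varepsilon-\pi\mu^\lambda\Big)-2\pi H(X^*)+\tfrac{\pi\Omega}{2}|X^*|^2+o(1),
\]
with $o(1)\to0$ in $C^1(\overline{B_{R_1}(0)})$. The crucial point is that, by Lemma \ref{811}, the bracket $\tfrac12\ln\tfrac1\varepsilon-\pi\mu^\lambda=O(1)$, so the divergent logarithm cancels; since $W^\lambda$ is uniformly bounded on $B_{R_1}(0)$ (logarithmic potential of a bounded, compactly supported density), this yields the sought uniform bound.

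With the $C^{1,\alpha}$ bound in hand, the compact embedding $C^{1,\alpha}\hookrightarrow C^1$ lets me pass to a subsequence with $V^\lambda\to\bar V$ in $C^1(\overline{B_{R_1}(0)})$. Using $\zeta^\lambda\to I_{B_1(0)}$ weakly star (Lemma \ref{815}), $W^\lambda$ converges to the full-plane logarithmic potential $W^*$ of $I_{B_1(0)}$; a direct computation (Newton's theorem for $|y|>1$, solving $-\Delta W^*=1$ radially for $|y|<1$) shows $W^*=V^*$. Hence $\bar V=V^*+\kappa$ for some constant $\kappa$, and the whole problem reduces to showing $\kappa=0$.

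The main obstacle is precisely this identification of $\kappa$, which amounts to the sharp $O(1)$ asymptotics of $\mu^\lambda$ that Lemma \ref{811} does not deliver. Rather than refine the energy estimate, I would exploit the level-set structure of the maximizer: by Corollary \ref{coro} and \eqref{AB} one has $\zeta^\lambda=I_{\{V^\lambda>0\}}$, so $C^1$-convergence forces $I_{\{V^\lambda>0\}}\to I_{\{\bar V>0\}}$ in $L^1_{loc}$ (the gradient of $V^*+\kappa$ is non-degenerate on its level sets near $\partial B_1$). Comparing with $\zeta^\lambda\to I_{B_1(0)}$ gives $\{\,V^*+\kappa>0\,\}=B_1(0)$; since $V^*$ is strictly radially decreasing across $|y|=1$, this holds only for $\kappa=0$, whence $\bar V=V^*$. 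As the limit is independent of the subsequence, the full family converges, and since $R_1>R_0$ is arbitrary this gives $V^\lambda\to V^*$ in $C^1_{loc}(\mathbb{R}^2)$, which is statement (iv) of Theorem \ref{Thm}.
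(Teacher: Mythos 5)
Your proposal is correct, and its skeleton matches the paper's: both identify the logarithmic potential of $\zeta^\lambda$ as the part converging to $V^*$, both invoke Lemma \ref{811} so that $\pi\mu^\lambda$ cancels the divergent $\tfrac12\ln\tfrac1\varepsilon$, and both pin down the leftover additive constant from the fact that $\lambda|A^\lambda|=1$ forces $|\{V^\lambda>0\}|=\pi$, which is compatible with $V^*+\kappa$ only when $\kappa=0$. The one genuine difference is the mechanism for showing that $V^\lambda$ minus that logarithmic potential tends to a constant. The paper works on the large rescaled domain $D_\lambda$: it shows $V^\lambda-\tilde V^\lambda$ is (after subtracting the quadratic term) harmonic in $D_\lambda$ with uniformly bounded boundary data, and then uses the interior gradient estimate for harmonic functions to get $\sup_{B_{R_1}}|\nabla(V^\lambda-\tilde V^\lambda)|\le C\varepsilon$, followed by Arzel\`a--Ascoli. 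You instead expand $G=N-h$ directly and observe that the $h$-term and the $\tfrac{\pi\Omega}{2}|X^\lambda+\varepsilon y|^2$ term have $y$-gradients of order $\varepsilon$ (since $h$ is smooth near $(X^*,X^*)\in D\times D$ and $\mathrm{supp}\,w^\lambda$ concentrates there by Lemma \ref{812}), hence converge in $C^1$ to the constants $-2\pi H(X^*)$ and $\tfrac{\pi\Omega}{2}|X^*|^2$. Your route is more explicit and avoids the boundary comparison on $\partial D_\lambda$, at the cost of needing $X^*$ to stay away from $\partial D$ (which Lemma \ref{813} guarantees, and which the paper also uses through $d=\tfrac12\,\mathrm{dist}(X^*,\partial D)>0$); the paper's route is slightly more robust in that it treats the whole non-Newtonian remainder at once. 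Your determination of $\kappa$ via $\zeta^\lambda=I_{\{V^\lambda>0\}}$ and the non-degeneracy of $\nabla V^*$ on $\partial B_1$ is a correct, slightly more detailed version of the paper's one-line conclusion $|\{V^\lambda>0\}|=\pi\Rightarrow C^*=0$.
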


\begin{proof}
Firstly, define $\tilde{V}^{\lambda}(y)\in C^1(\mathbb{R}^2)$ by setting
\[\tilde{V}^{\lambda}(y)=\frac{1}{2\pi}\int_{\mathbb{R}^2}\ln|y-y'|^{-1}\zeta^{\lambda}(y')dy'.\]
Since $\text{supp}(\zeta^{\lambda})\subset \overline{B_{R_0}(0)}$ and $0\leq \zeta^{\lambda}\leq 1$ in $B_{R_0}(0)$, by standard elliptic theory, we have,
\begin{equation}
\begin{cases}
&-\Delta\tilde{V}^{\lambda}(y)=\zeta^{\lambda}(y),\ \ \ \ \ \ y\in B_{R_1}(0), \\
&|\nabla\tilde{V}^{\lambda}(y)|\leq C,\ \ \ \ \ \ \ \ \ \ \ \ y\in B_{R_1}(0), \\
&|\nabla\tilde{V}^{\lambda}(y)-\nabla\tilde{V}^{\lambda}(y')|\leq C|y-y'|\ln(1+\frac{2R_1}{|y-y'|}),\ \ y,y'\in B_{R_1}(0).
\end{cases}
\end{equation}
 So we know that $\{\tilde{V}^{\lambda}(y)\}$ and $\{\nabla\tilde{V}^{\lambda}(y)\}$ are both equicontinuous in $B_{R_1}(0)$. Since $\zeta^{\lambda}\rightharpoonup \zeta^*$ weakly star in $L^{\infty}(B_{R_1}(0))$, we have $\tilde{V}^{\lambda}(y)\rightarrow V^*(y)$ and $\nabla\tilde{V}^{\lambda}(y)\rightarrow \nabla V^*(y)$ a.e. in $B_{R_1}(0)$. By Arzela-Ascoli's theorem,
\begin{equation}\label{323}
\tilde{V}^{\lambda}\rightarrow V^*\ \ \ \ \text{in}\ \ C^1(\overline{B_{R_1}(0)}),
\end{equation}
as $\lambda\rightarrow+\infty.$
On the other hand, by a simple calculation we know that $V^{\lambda}$ satisfies,
\begin{equation}\label{322}
\begin{cases}
-\Delta V^{\lambda}= \zeta^{\lambda}-\frac{2\Omega}{\lambda}&\text{in } D_{\lambda}, \\
V^{\lambda}=\pi\left(\frac{\Omega}{2}-\mu^{\lambda}\right)&\text{on } \partial D_{\lambda}.
\end{cases}
\end{equation}
Recall that we have obtained the following estimate for $\mu^\lambda$ in Lemma \ref{811}
\[\mu^{\lambda}=\frac{1}{2\pi}\ln\frac{1}{\varepsilon}+O(1),\,\,\text{as }\lambda\rightarrow +\infty.\]
Set $d=\frac{1}{2}dist(X^*,\partial D)>0$. Then for sufficiently large $\lambda$, $d<dist(X^{\lambda}, \partial D)$, which implies ${d}/{\varepsilon}\leq |y| \leq 2/{\varepsilon}$ for any $y\in \partial D_{\lambda}$. Therefore for any $y\in \partial D_{\lambda}$
\[V^{\lambda}(y)=\pi(\frac{\Omega}{2}-\mu^{\lambda})=-\frac{1}{2}\ln\frac{1}{\varepsilon}+O(1)=-\frac{1}{2}\ln|y|+O(1).\]
Meanwhile, for any $y\in \partial D_{\lambda}$,
\begin{equation}
\begin{split}
\tilde{V}^{\lambda}(y)=&\frac{1}{2\pi}\int_{\mathbb{R}^2}\ln|y-y'|^{-1}\zeta^{\lambda}(y')dy' \\
=&\frac{1}{2\pi}\int_{B_{R_0}(0)}\ln|y|^{-1}\zeta^{\lambda}(y')dy'+O(1) \\
=&\frac{1}{2}\ln|y|^{-1}+O(1).
\end{split}
\end{equation}
Hence $|V^{\lambda}-\tilde{V}^{\lambda}|\leq C$ on $\partial D_{\lambda}$. Therefore, according to \eqref{322}, we get
\begin{equation}
\begin{cases}
&\Delta(V^{\lambda}-\tilde{V}^{\lambda})=\frac{2\Omega}{\lambda}\ \ \ \ \text{in}\ \ D_{\lambda}, \\
&|V^{\lambda}-\tilde{V}^{\lambda}|\leq C \ \ \ \ \ \ \ \ \ \ \ \ \ \text{on}\ \ \partial D_{\lambda},
\end{cases}
\end{equation}
which implies
\begin{equation}
\begin{cases}
&\Delta\left(V^{\lambda}-\tilde{V}^{\lambda}-\frac{\pi\Omega}{2}|X^{\lambda}+\varepsilon y|^2\right)=0\ \ \ \ \text{in}\ \ D_{\lambda}, \\
&\left|V^{\lambda}-\tilde{V}^{\lambda}-\frac{\pi\Omega}{2}|X^{\lambda}+\varepsilon y|^2\right|\leq C \ \ \ \ \ \ \ \ \text{on}\ \ \partial D_{\lambda},
\end{cases}
\end{equation}
where we used $\frac{\pi\Omega}{2}|X^{\lambda}+\varepsilon y|^2\leq C$ for any $y\in \partial D_{\lambda}$.
By the interior gradient estimate for harmonic functions, we deduce that
\[\sup_{y\in \overline{{B}_{{R_1}(0)}}}\left|\nabla V^{\lambda}(y)-\nabla\tilde{V}^{\lambda}(y)-\nabla\left(\frac{\pi\Omega}{2}|X^{\lambda}+\varepsilon y|^2\right)\right|\leq C\varepsilon.\]
Note that $|\nabla\left(\frac{\pi\Omega}{2}|X^{\lambda}+\varepsilon y|^2\right)|\leq C\varepsilon$ for any $y\in \overline{{B}_{{R_1}(0)}}$,~so we obtain
\[\sup_{y\in \overline{{B}_{{R_1}(0)}}}\left|\nabla V^{\lambda}(y)-\nabla\tilde{V}^{\lambda}(y)\right|\leq C\varepsilon.\]
Arzela-Ascoli's theorem yields that\,(up to a subsequence)\, there exists some constant $C^*$ such that
\[V^{\lambda}-\tilde{V}^{\lambda}\rightarrow C^*\ \ \\ \ \text{in}\ \ C^1(\overline{{B}_{{R_1}(0)}}),\]
which together with \eqref{323} gives
\begin{equation*}\label{324}
V^{\lambda}\rightarrow V^*+C^*\ \ \\ \ \text{in}\ \ C^1(\overline{{B}_{{R_1}(0)}}).
\end{equation*}
  Recall that if $\lambda>2\Omega$, then $\lambda|\{x\in D \mid \psi^{\lambda}(x)>0\}|= 1$, which yields $|\{y\in B_{R_1}(0)\mid V^{\lambda}(y)>0\}|= \pi$, and thus $C^*=0$. We note that this conclusion is independent of the choice of any subsequence, thus the proof is completed.

\end{proof}

Now we are ready to prove Theorem \ref{Thm}.
\begin{proof}[Proof of Theorem \ref{Thm}]
It suffices to show that any maximizer $w^\lambda$ satisfies \eqref{001}. For any $\phi\in C^{\infty}_c(D)$ and $x\in D$, consider the following ordinary differential equation
\begin{equation}\label{ORD}
\begin{cases}\frac{d\Phi_s(x)}{ds}=\nabla^\perp\phi(\Phi_s(x)) &s\in\mathbb R, \\
\Phi_0(x)=x.
\end{cases}
\end{equation}
Since $\nabla^\perp\phi\in C_c^\infty(D;\mathbb R^3)$, $\eqref{ORD}$ has a unique global solution. It is easy to check that $\nabla^\perp\phi$ is divergence-free, so $\Phi_s$ is a measure-preserving transformation from $D$ to $D$, that is, for any measurable set $F\subset D$, we have $|\{\Phi_s(x)\mid x\in F\}|=|F|$. Now we define a family of test functions $\{w_s\}_{t\in\mathbb R}$ by setting
\begin{equation}
w_s(x):=w^\lambda(\Phi_s(x)).
\end{equation}
It is easy to see that $w_s\in K_{\lambda}(D)$.  So we obtain
\begin{equation}\label{0000}
\frac{d\mathcal{E}(w_s)}{ds}|_{s=0}=0.
\end{equation}
Expanding $\mathcal{E}(\omega_s)$ at $s=0$, we have
\begin{equation}\label{0002}
\begin{split}
\mathcal{E}(w_s)=&\frac{1}{2}\int_D\int_DG(x,y)w^\lambda(\Phi_s(x))w^\lambda(\Phi_s(y))dxdy+\frac{\Omega}{2}\int_D|x|^2w^\lambda(\Phi_s(x))dx\\
=&\frac{1}{2}\int_D\int_DG(\Phi_{-s}(x),\Phi_{-s}(y))w^\lambda(x)w^\lambda(y)dxdy+\frac{\Omega}{2}\int_D|(\Phi_{-s}(x))|^2w^\lambda(x)dx\\
=&\mathcal{E}(w^\lambda)+s\int_Dw^\lambda(x)\nabla^\perp(Gw^\lambda(x)+\frac{\Omega}{2}|x|^2)\cdot\nabla\phi(x) dx+o(s).
\end{split}
\end{equation}
From \eqref{0000} and \eqref{0002} we get the desired result.

\end{proof}

\section{Proof of Theorem \ref{os}}
In this section we prove Theorem \ref{os}. To this end, we need two lemmas first.
\begin{lemma}\label{com}
Let $\{w_n\}$ be a maximizing sequence for $\mathcal{E}$ in $K_\lambda(D)$, then up to a subsequence there exists $w^\lambda\in \mathcal{S}_\lambda$ such that as $n\rightarrow+\infty$, $w_n\rightarrow w^\lambda$ in $L^p(D)$ for any $p\in[1,+\infty)$.
\end{lemma}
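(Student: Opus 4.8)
The plan is to first extract a weak-star limit and identify it as a maximizer, and then to exploit the fact that every maximizer is a genuine vortex patch (Corollary \ref{coro}) to upgrade weak convergence to strong $L^p$ convergence. The only real difficulty is this last upgrade: weak-star convergence alone never gives strong convergence, and what rescues us is precisely the two-valued structure of the limit, which is a consequence of the strict-convexity argument behind Proposition \ref{patch}.

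First I would invoke the weak-star compactness of $K_\lambda(D)$ in $L^\infty(D)$ established in the proof of Proposition \ref{802}: passing to a subsequence, $w_n \to w^\lambda$ weakly star in $L^\infty(D)$ for some $w^\lambda \in K_\lambda(D)$. Since $\mathcal{E}$ is sequentially weakly star continuous (again by Proposition \ref{802}) and $\{w_n\}$ is a maximizing sequence, it follows that $\mathcal{E}(w^\lambda)=\lim_{n\to\infty}\mathcal{E}(w_n)=\sup_{K_\lambda(D)}\mathcal{E}$, so that $w^\lambda\in\mathcal{S}_\lambda$. By Corollary \ref{coro}, $w^\lambda=\lambda I_{\tilde{A}^\lambda}$ for some measurable $\tilde{A}^\lambda\subset D$, and the normalization $\int_D w^\lambda\,dx=1$ forces $\lambda|\tilde{A}^\lambda|=1$.

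The heart of the argument is then to convert weak-star convergence into strong $L^1$ convergence, and here the patch structure of $w^\lambda$ is decisive. Testing the weak convergence against $w^\lambda\in L^1(D)$ gives
\[
\int_D w_n\, w^\lambda\, dx \longrightarrow \int_D (w^\lambda)^2\, dx = \lambda^2|\tilde{A}^\lambda|,
\]
that is, $\int_{\tilde{A}^\lambda} w_n\,dx \to \lambda|\tilde{A}^\lambda|$. Since $0\le w_n\le \lambda$ a.e., we have $\lambda-w_n\ge 0$ on $\tilde{A}^\lambda$, whence
\[
\int_{\tilde{A}^\lambda}|w_n-w^\lambda|\,dx = \int_{\tilde{A}^\lambda}(\lambda-w_n)\,dx = \lambda|\tilde{A}^\lambda|-\int_{\tilde{A}^\lambda}w_n\,dx \to 0 .
\]
Combining this with the mass constraint $\int_D w_n\,dx=1=\lambda|\tilde{A}^\lambda|$ yields $\int_{D\setminus\tilde{A}^\lambda} w_n\,dx = 1-\int_{\tilde{A}^\lambda}w_n\,dx \to 0$; since $w_n\ge 0$ and $w^\lambda=0$ there, this means $w_n\to w^\lambda$ in $L^1(D\setminus\tilde{A}^\lambda)$. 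Hence $w_n\to w^\lambda$ in $L^1(D)$.

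Finally I would upgrade to general $p$ by interpolating against the uniform $L^\infty$ bound. Since $|w_n-w^\lambda|\le 2\lambda$ a.e., for any $p\in[1,+\infty)$,
\[
\|w_n-w^\lambda\|_{L^p(D)}^p = \int_D |w_n-w^\lambda|^p\,dx \le (2\lambda)^{p-1}\int_D |w_n-w^\lambda|\,dx \to 0,
\]
which gives the claim. As noted, the step I expect to be the genuine obstacle is the passage from weak to strong convergence; it would fail for an arbitrary admissible limit, and it is the patch characterization of maximizers from Corollary \ref{coro} (ultimately the strict convexity of $\mathcal{E}$ used in Proposition \ref{patch}) that makes the limiting profile saturate the pointwise constraint and thereby forces compactness.
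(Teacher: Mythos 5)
Your proof is correct, and it reaches the same conclusion by a slightly different mechanism at the crucial compactness step. Both you and the paper begin identically: extract a weak-star limit $w^\lambda\in K_\lambda(D)$, use the weak-star continuity of $\mathcal{E}$ from Proposition \ref{802} to see $w^\lambda\in\mathcal{S}_\lambda$, and then invoke Corollary \ref{coro} to get the patch form $w^\lambda=\lambda I_{\tilde A^\lambda}$ with $\lambda|\tilde A^\lambda|=1$. Where you diverge is in upgrading weak to strong convergence. The paper works in $L^2$: it computes $\|w^\lambda\|_{L^2}=\lambda^{1/2}$ from the patch structure, bounds $\|w_n\|_{L^2}\le\lambda^{1/2}$ from the constraints $0\le w_n\le\lambda$ and $\int_D w_n=1$, concludes that the norms converge, and then appeals to the uniform convexity of $L^2$ (weak convergence plus norm convergence implies strong convergence). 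You instead test the weak-star convergence against $w^\lambda$ itself to get $\int_{\tilde A^\lambda}w_n\to\lambda|\tilde A^\lambda|$, and then split $D$ into $\tilde A^\lambda$ (where $|w_n-w^\lambda|=\lambda-w_n$) and its complement (where $|w_n-w^\lambda|=w_n$, controlled by the mass constraint), obtaining $L^1$ convergence directly and interpolating with the $L^\infty$ bound. Your route is more elementary in that it avoids the Radon--Riesz/uniform convexity argument, and it makes transparent exactly why the bang-bang structure of the limit forces compactness; the paper's route is shorter to write and generalizes immediately to any situation where one can show the maximizer saturates the $L^2$ norm bound on the admissible class. Both arguments rest on the same essential input, namely the patch characterization of maximizers from Corollary \ref{coro}, and your diagnosis that this (via the strict convexity behind Proposition \ref{patch}) is the real engine of the lemma is accurate.
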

\begin{proof}
Since $\{w_n\}$ is a bounded sequence in $L^\infty(D)$, it suffices to show that $w_n\rightarrow w^\lambda$ in $L^2(D)$.
Firstly according to the proof of Proposition \ref{802}, for any maximizing sequence $w_n$, there must be a maximizer $w^\lambda\in K_\lambda(D)$ such that $w_n\rightarrow w^\lambda$ weakly star in $L^\infty(D)$. Thus $w_n\rightarrow w^\lambda$ weakly in $L^2(D)$, which implies
\begin{equation}\label{4-1}
\|w^\lambda\|_{L^2(D)}\leq\liminf_{n\rightarrow+\infty}\|w_n\|_{L^2(D)}.
\end{equation}
On the other hand, by Corollary \ref{coro}, $w^\lambda$ must have the form $w^\lambda=\lambda I_{\tilde{A}^\lambda}$ with $\lambda|\tilde{A}^\lambda|=1$, which gives
\begin{equation}\label{4-2}
\|w^\lambda\|_{L^2(D)}=\lambda|\tilde{A}^\lambda|^{1/2}=\lambda^{1/2}.
\end{equation}
But for any $n$,
\begin{equation}\label{4-3}
\|w_n\|_{L^2(D)}=\left(\int_D|w_n(x)|^2dx\right)^{1/2}\leq\lambda^{1/2}\left(\int_D|w_n(x)|dx\right)^{1/2}=\lambda^{1/2}.
\end{equation}
Combining \eqref{4-2} and \eqref{4-3} we obtain
\begin{equation}\label{4-4}
\|w^\lambda\|_{L^2(D)}\geq\limsup_{n\rightarrow+\infty}\|w_n\|_{L^2(D)}.
\end{equation}
Now by \eqref{4-1} and \eqref{4-4} we have
\begin{equation}\label{4-5}
\|w^\lambda\|_{L^2(D)}=\lim_{n\rightarrow+\infty}\|w_n\|_{L^2(D)}.
\end{equation}
By the uniform convexity of $L^2(D)$ we conclude that $w_n\rightarrow w^\lambda$ in $L^2(D)$.

\end{proof}

\begin{lemma}\label{dl}[\cite{B5}, Lemma 11, Lemma 12]
Let $w(x,t)\in L^\infty_{loc}(\mathbb R; L^p(D))$ with $3/4<p<+\infty$. Let $\mathbf{u}=\nabla^\perp Gw$, $\zeta_0\in L^p(D)$. Then there exists a unique weak solution $\zeta(x,t)\in L^\infty_{loc}(\mathbb R; L^p(D))\cap C(\mathbb R; L^p(D))$ to the following linear transport equation
\begin{equation}\label{lt}
\begin{cases}
\partial_t\zeta+\mathbf{u}\cdot\nabla\zeta=0, &t\in\mathbb R,\\
\zeta(\cdot,0)=\zeta_0.
\end{cases}
\end{equation}
Here by weak solution we mean
\begin{equation}
\begin{split}
\int_{\mathbb R}\int_D\partial_t\phi(x,t)\zeta(x,t)+\zeta(x,t)&(\mathbf{u}\cdot\nabla\phi)(x,t)dxdt=0,\,\,\forall\,\,\phi\in C_c^\infty(D\times\mathbb R),\\
&\lim_{t\rightarrow0}\|\zeta(\cdot,t)-\zeta_0\|_{L^p(D)}=0.
\end{split}
\end{equation}
Moreover, we have for any $t\in \mathbb R$
\begin{equation}\label{re}
|\{x\in D\mid \zeta(x,t)>a\}|=|\{x\in D\mid\zeta_0(x)>a\}|,\,\,\forall \,\,a\in\mathbb R.
\end{equation}
As a consequence, we have for any $t\in \mathbb R$
\begin{equation}
\|\zeta(\cdot,t)\|_{L^p(D)}=\|\zeta_0\|_{L^p(D)}.
\end{equation}

\end{lemma}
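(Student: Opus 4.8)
The statement is the standard well-posedness together with measure-preservation for a linear transport equation with a divergence-free Sobolev drift, so the plan is to reduce everything to the DiPerna--Lions theory of transport equations \cite{DL} (the same theory already invoked behind Theorem~A). The whole point is that $\mathbf u=\nabla^\perp Gw$ is an \emph{incompressible} field, which is exactly the favorable case of that theory.

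\textbf{Step 1 (structure and regularity of the drift).} First I would record the two features of $\mathbf u=\nabla^\perp Gw$ that make the theory applicable. Since $Gw$ solves $-\Delta Gw=w$ in $D$ with zero Dirichlet data, Calder\'on--Zygmund / $L^p$ elliptic estimates give $Gw\in L^\infty_{loc}(\mathbb R;W^{2,p}(D))$, hence $\mathbf u\in L^\infty_{loc}(\mathbb R;W^{1,p}(D))$; the integrability of $\mathbf u$, and of the pairing $\zeta\,\mathbf u\cdot\nabla\phi$ against compactly supported test functions, follows from the Biot--Savart bound $|\nabla G(x,y)|\lesssim|x-y|^{-1}$ together with the weak Young (Hardy--Littlewood--Sobolev) inequality, and it is precisely these elliptic/potential estimates that fix the lower threshold on $p$. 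The decisive structural fact is that $\mathbf u$ is a perpendicular gradient, so $\mathrm{div}\,\mathbf u=\partial_1\partial_2 Gw-\partial_2\partial_1 Gw=0$, while $\mathbf u\cdot\vec n=0$ on $\partial D$ because $Gw$ vanishes there. Thus $\mathbf u$ is incompressible and tangent to the boundary, so the flow it generates stays inside the bounded set $D$, removing any need for a growth condition at infinity.

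\textbf{Step 2 (existence, uniqueness, continuity in time).} With $\mathbf u$ in the DiPerna--Lions class and $\mathrm{div}\,\mathbf u=0$, I would invoke \cite{DL} to produce a unique regular Lagrangian flow $\Phi_t:D\to D$, defined for a.e.\ $x$, measure-preserving (because the divergence vanishes), and invertible with $\Phi_t^{-1}=\Phi_{-t}$ (reversibility uses that $\mathbf u$ is integrable in time on both sides of $0$). I would then set $\zeta(x,t):=\zeta_0(\Phi_t^{-1}(x))$ and verify directly that it satisfies the weak formulation in the statement. Uniqueness in the class $L^\infty_{loc}(\mathbb R;L^p(D))$ follows from the renormalization property of \cite{DL}: every weak solution is renormalized, so two solutions with the same initial datum must coincide. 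Continuity $\zeta\in C(\mathbb R;L^p(D))$ is obtained by approximating $\zeta_0$ in $L^p$ by smooth data, for which $t\mapsto\zeta_0(\Phi_t^{-1}(\cdot))$ is plainly continuous in $L^p$, and then passing to the limit using the uniform $L^p$ bound from Step~3.

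\textbf{Step 3 (rearrangement invariance and $L^p$ conservation).} Because $\Phi_t$ preserves Lebesgue measure, for every $a\in\mathbb R$
\begin{equation*}
|\{x\in D\mid \zeta(x,t)>a\}|=|\Phi_t(\{\zeta_0>a\})|=|\{x\in D\mid \zeta_0(x)>a\}|,
\end{equation*}
which is exactly \eqref{re}. The identity $\|\zeta(\cdot,t)\|_{L^p(D)}=\|\zeta_0\|_{L^p(D)}$ is then immediate from the layer-cake representation of the $L^p$ norm, since that quantity depends only on the distribution function of $|\zeta|$.

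\textbf{Main obstacle.} The genuinely delicate point is Step~2: confirming that the drift $\mathbf u=\nabla^\perp Gw$ meets the hypotheses of the transport theory at the low-regularity end of the range of $p$, where $w\in L^p$ forces the use of weak-type ($L^{1,\infty}$) Calder\'on--Zygmund information on $\nabla\mathbf u$ rather than a clean $W^{1,1}$ bound, and then obtaining uniqueness through the renormalization property — which is the technical heart of the argument. Steps~1 and~3 are essentially bookkeeping once the measure-preserving flow $\Phi_t$ is in hand. Since this verification is precisely the content of \cite[Lemmas 11 and 12]{B5}, in practice I would cite those statements after checking that their hypotheses hold for $\mathbf u=\nabla^\perp Gw$.
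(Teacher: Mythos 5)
The paper contains no argument for this lemma at all: it is imported wholesale from Burton \cite{B5} (Lemmas 11 and 12), so the only thing to compare is whether your self-contained sketch would actually deliver the statement. It follows the right underlying route (the DiPerna--Lions theory \cite{DL}), but one step fails on part of the stated range of $p$. In Step 2 you obtain uniqueness from ``every weak solution is renormalized''. The commutator estimate behind renormalization requires $\zeta\,\nabla\mathbf u\in L^1_{loc}$; here Calder\'on--Zygmund gives $\nabla\mathbf u\in L^p$ from $w\in L^p$, and with $\zeta\in L^p$ the product lies only in $L^{p/2}$, so renormalizing $\zeta$ itself is available only when $p\geq 2$. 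The range $4/3<p<2$ is not vacuous for this paper, since Theorem \ref{os} is proved for all $p\geq 3/2$. The standard repair, which is what underlies Burton's lemmas, is a duality argument: because $\mathrm{div}\,\mathbf u=0$ the adjoint problem is the same transport equation run backwards; its solutions with smooth compactly supported terminal data are bounded, and \emph{bounded} solutions are renormalized as soon as $\nabla\mathbf u\in L^1_{loc}$; one then tests the difference of two $L^p$ solutions against such bounded adjoint solutions, which is legitimate because $\mathbf u\,\zeta\in L^1$ via the embedding $W^{1,p}\hookrightarrow L^{2p/(2-p)}$, and $2p/(2-p)\geq p'=p/(p-1)$ holds exactly when $p\geq 4/3$. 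This computation also identifies the true source of the lower threshold, and shows that your ``main obstacle'' paragraph misdiagnoses it: for $p>1$ the elliptic bound on $\nabla\mathbf u$ is a clean strong-type estimate (weak-type Calder\'on--Zygmund information is needed only at $p=1$); the delicate point is the exponent bookkeeping in the uniqueness step, not the potential theory. (In the one place the paper applies the lemma, the transported datum $w^n_0\in K_\lambda(D)$ is bounded, so there the naive renormalization route does suffice.)

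Two smaller observations. The hypothesis ``$3/4<p$'' in the statement is almost certainly a typo for $4/3<p$: for $p<1$ the space $L^p$ is not even normed, and $4/3$ is precisely the exponent produced by the pairing computation above, consistent with Definition 1.1 and Theorem A. Finally, since your closing move is to cite \cite{B5} after checking its hypotheses, your fallback coincides exactly with what the paper does — it states the lemma with that citation and proves nothing — and your Steps 1 and 3 (divergence-free drift tangent to $\partial D$, measure-preserving flow, layer-cake identity for the $L^p$ norm) are correct as written.
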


Now we are ready to prove Theorem \ref{os}.
\begin{proof}[Proof of Theorem \ref{os}]
We give the proof by contradiction. Suppose that there exists a $\delta_0>0$, $t_n>0,$ $v_0^n\in L^p(D)$ satisfying $dist_p(v_0^n,\mathcal{S}_\lambda)\rightarrow0,$ but
\begin{equation}\label{444}
dist_p(v^n_{t_n},\mathcal{S}_\lambda)>\delta_0.
\end{equation} Here $v^n_{t}$ is a weak solution to the vorticity equation with initial $v^n_0.$
Since $3/2\leq p<+\infty$, by energy and angular momentum conservation in Theorem {A} it is easy to check that $\{v^n_{t_n}\}$ satisfies
\begin{equation}\label{4-6}
\lim_{n\rightarrow+\infty}\mathcal{E}(v^n_{t_n})=\sup_{K_\lambda(D)}\mathcal{E}.
\end{equation}

\textbf{Case 1:} If $v_0^n\in K_\lambda(D)$, then the proof is easy. In this case, we have $v^n_{t_n}\in K_\lambda(D)$, so by Lemma \ref{com}, up to a subsequence $v^n_{t_n}\rightarrow w^\lambda$ in $L^p(D)$ for some $w^\lambda\in\mathcal{S}_\lambda$, which contradicts \eqref{444}.

\textbf{Case 2:} For general $v^n_{0}\in L^p(D)$, we need Lemma \ref{dl}.
Since $dist_p(v^n_0,\mathcal{S}_\lambda)\rightarrow0$, we can choose $w^n_0\in\mathcal{S}_\lambda$ such that as $n\rightarrow+\infty$
\begin{equation}\label{4-7}
\|w^n_0-v^n_0\|_{L^p(D)}\rightarrow0.
\end{equation}
Now for each $n$, let $w^n(x,t)$ be the solution of the following linear transport equation
\begin{equation}\label{4-8}
\begin{cases}
\partial_tw^n(x,t)+\nabla^\perp Gv^n_t\cdot\nabla w^n(x,t)=0,\\
w^n(x,0)=w^n_0(x).
\end{cases}
\end{equation}
By Lemma \ref{dl}, it is clear that $w^n(\cdot,t)\in K_\lambda(D)$ for any $t>0$, and as $n\rightarrow+\infty$
\begin{equation}\label{4-9}
\|w^n(\cdot,t_n)-v^n_{t_n}\|_{L^p(D)}=\|w^n_0-v^n_0\|_{L^p(D)}\rightarrow0.
\end{equation}
Combining \eqref{4-6} and \eqref{4-9} we obtain
\begin{equation}\label{4-10}
\lim_{n\rightarrow+\infty}\mathcal{E}(w^n(\cdot,t_n))=\sup_{K_\lambda(D)}\mathcal{E}.
\end{equation}
Then by Lemma \ref{com} we conclude that there exists $w^\lambda\in \mathcal{S}_\lambda$ such that $  \|w^n(\cdot,t_n)-w^\lambda\|_{L^p(D)}\rightarrow0$, which gives
\begin{equation}\label{4-11}
   dist_p(w^n(\cdot,t_n),\mathcal{S}_\lambda)\rightarrow0.
\end{equation}
Now \eqref{444},\eqref{4-9} and \eqref{4-11} together lead to a contradiction. Thus Theorem \ref{os} is proved.
\end{proof}

\noindent{\bf Acknowledgments:}
 Daomin Cao was partially supported by Hua Luo Geng Center of Mathematics,
AMSS, CAS, he was also partially supported by NNSF of China grant No.11331010 and No.11771469.
Guodong Wang was supported by NNSF of China grant No.11771469.

\end{document}